\newcommand{\T}{{\mathbb{T}}}
\newcommand{\Kp}{K^{\perp}/K}
\newcommand{\R}{{\mathbb{R}}}
\newcommand{\C}{{\mathbb{C}}}
\newcommand{\J}{\mathcal{J}}
\newcommand{\ra}{\rangle}
\newcommand{\la}{\langle}
\newcommand{\Diff}{\mathrm{Diff}}
\newcommand{\lie}{\mathcal{L}}
\newcommand{\pr}{\mathrm{pr}}
\newcommand{\TM}{\mathbb{T}M}
\newcommand{\TCM}{\mathbb{T}_\C M}
\newcommand{\TCN}{\mathbb{T}_\C N}
\newcommand{\TN}{\mathbb{T}N}
\theoremstyle{plain}
\newtheorem{theorem}{Theorem}[section]
\newtheorem{proposition}[theorem]{Proposition}
\newtheorem{corollary}[theorem]{Corollary}
\newtheorem{lemma}[theorem]{Lemma}
\theoremstyle{remark}
\newtheorem{remark}[theorem]{Remark}
\newtheorem{example}[theorem]{Example}
\theoremstyle{definition}
\newtheorem{definition}[theorem]{Definition}
\newenvironment{customthm}[1]
{\innercustomthm}
{\endinnercustomthm}
\DeclareMathOperator{\type}{type}
\DeclareMathOperator{\order}{order}
\DeclareMathOperator{\Ann}{Ann}
\DeclareMathOperator{\Aut}{Aut}
\DeclareMathOperator{\codim}{codim}
\DeclareMathOperator{\im}{Im}
\DeclareMathOperator{\re}{Re}
\DeclareMathOperator{\rk}{rank}
\DeclareMathOperator{\cork}{corank}
\DeclareMathOperator{\Id}{Id}
\newcommand{\oL}{{\overline{L}}}
\newcommand{\st}{\;|\;}
\newcommand{\cJ}{\mathcal{J}}
\newcommand{\al}{\alpha}
\newcommand{\be}{\beta}
\renewcommand{\phi}{\varphi}
\DeclareMathOperator{\End}{End}
\newcommand{\oE}{\overline{E}}
\DeclareMathOperator{\ri}{real-index}
\title[Complex Dirac structures: invariants and local structure\;\;\;]{Complex Dirac structures: \\ invariants and local structure}
\author[D. Aguero]{Dan Aguero}
\author[R. Rubio]{Roberto Rubio}
\address{Instituto de Matemática Pura e Aplicada (IMPA), Rio de Janeiro  22460-320, Brazil}
\email{dagack2918@gmail.com}
\address{Universitat Aut\`onoma de Barcelona, 08193 Barcelona, Spain, and \\ \indent Universitat de Barcelona, 08007 Barcelona, Spain}
\email{roberto.rubio@uab.es}
\thanks{This project has received funding from the European Union’s Horizon 2020 research and innovation programme under the Marie Sklodowska-Curie grant agreements No 750885 GENERALIZED and No 801370 (COFUND Beatriu de Pinós 2018 00195). The first author has been supported by a PhD scholarship of the CNPq, and by the CNE Faperj. The second author has been supported by the Spanish MICINN under grant PID2019-109339GA-C32.}
\begin{document}
\begin{abstract}
We study complex Dirac structures, that is, Dirac structures in the complexified generalized tangent bundle. These include presymplectic foliations, transverse holomorphic structures, CR-related geometries  and generalized complex structures. 
We introduce two invariants, the order and the (normalized) type. We show that, together with the real index, they allow us to obtain a pointwise classification of complex Dirac structures. For constant order, we prove the existence of an underlying real Dirac structure, which generalizes the Poisson structure associated to a generalized complex structure. For constant real index and order, we prove a splitting theorem, which gives a local description in terms of a presymplectic leaf and a small transversal. 
\end{abstract}
\maketitle

\section{Introduction}

A Poisson bivector does not necessarily restrict on a submanifold to a Poisson bivector, but to a Dirac structure \cite{courant1990dirac}. Analogously, generalized complex structures \cite{hitchin:2004,gualtieri:2007}, encompassing complex and symplectic structures, fail to restrict on a submanifold as a generalized complex structure. For instance, a codimension-one submanifold of a complex manifold does not inherit a generalized complex structure, but a CR structure; whereas a codimension-one submanifold of a symplectic manifold inherits a presymplectic structure that is necessarily degenerate, and hence not generalized complex. However, both structures define complex Dirac structures on the submanifolds. Thus, just as Dirac structures arise when studying submanifolds of Poisson manifolds, complex Dirac structures naturally appear on submanifolds of generalized complex manifolds.  As generalized complex geometry has been proved to be useful in the understanding of supersymmetry \cite{lindstrom2005generalized,hull-lindstrom}, sigma models \cite{calvo2006supersymmetric,kapustin2007topological}, and field theory and the AKSZ formalism \cite{cattaneo20102d}, we expect complex Dirac structures to play an important role in these and related topics.

Complex Dirac structures are Dirac structures in the complexified generalized tangent bundle. The real index is the most basic invariant of a complex Dirac structure \cite{kopzcynski-trautman, gualtieri:2004}. When the real index is zero, a complex Dirac structure is a generalized complex structure. For arbitrary real index, complex Dirac structures can also describe objects such as presymplectic structures and CR-related geometries, extending the way generalized complex geometry encompasses symplectic and complex structures.

The purpose of this work is a systematic study of complex Dirac structures. In this paper, we introduce two new invariants: the order (Definition \ref{def:order}), which vanishes on generalized complex structures, and a normalized version of the type (Definition \ref{def:type}), which extends the type for generalized complex structures. For fixed real index and order, structures with type $0$ or maximal are, respectively, transformations of regular real Dirac structures and transverse CR structures. We prove that the three invariants altogether determine the pointwise geometry of a complex Dirac structure (Proposition \ref{prop:linear-normal-form}).

Since complex Dirac structures include generalized complex structures, 
it is natural to ask what properties can be extended and how. We focus on two of them. On the one hand, any generalized complex structure has associated a Poisson structure. In the context of complex Dirac structures we prove the following:
\begin{customthm}{\ref{th:Dirac-constant-order}}\label{thm-intro-1}
A complex Dirac structure with constant order has an underlying real Dirac structure, which agrees with the Poisson structure of a generalized complex structure when the real index is zero.
\end{customthm}
\noindent This suggests that the order is a more natural invariant than the real index. Indeed, we provide an example of a complex Dirac structure with constant real index whose presymplectic distribution is not smooth and hence does not define a real Dirac structure (Section \ref{sec:ex-order-change}).

On the other hand, generalized complex structures \cite{abouzaid2006local} and real Dirac structures \cite{blohmann} have a splitting theorem inspired by the Weinstein splitting theorem for Poisson structures. For complex Dirac structures we prove the following:
\begin{customthm}{\ref{splitting}}\label{ref:theo-intro-2}
Let $L$ be a complex Dirac structure with constant real index $r$ and order $s$, and let $m\in M$ be a point of type $k$.
Then, locally around $m$, $L$ is equivalent (via a diffeomorphism and $B$-transformation) to the product of a presymplectic manifold (with $(r-s)$-dimensional kernel) and a complex Dirac structure of constant real index $s$ and order  $s$ whose associated real Dirac structure is the graph of a Poisson bivector vanishing at $m$. 
\end{customthm}

 When the real index and the order vanish, we recover the splitting theorem for generalized complex structures \cite[Thm. 1.4]{abouzaid2006local}. The proof of our result relies on the techniques developed in \cite{bursztyn2019splitting}.

 Among other applications, we hope that this approach to complex Dirac structures will allow us  to better understand reduction \cite{bursztyn2007reduction, hu2009hamiltonian, lin2006symmetries,  stienon2008reduction, vaisman2007reduction} and blow-up \cite{ bailey2019blow, cavalcanti2009blow, cavalcanti2011blowing,  van2018blow} of generalized complex structures, as we can drop the hypothesis of the resulting structure remaining generalized complex.

The article is organized as follows. In Section 2, we recall the basic definitions and properties of Dirac and generalized complex structures. Section 3 deals with submanifolds of generalized complex structures and serves as a motivation. In Section 4, we introduce the main tools of the theory: the associated distributions and the two new invariants, the order and the (normalized) type. We present the relationship among these invariants and a pointwise classification of complex Dirac structures. In Section 5, we prove Theorem \ref{thm-intro-1} and study some of its consequences. Section 6 gathers two important examples:  a complex Dirac structure with constant real index  with no associated real Dirac structure, and a foliation by generalized complex leaves. Finally, in Section 7 we prove Theorem \ref{ref:theo-intro-2} and in an appendix we give a visual representation of the invariants.

\subsection*{Acknowledgements}
This work stems from the PhD thesis of the first author, supervised by the second author and Henrique Bursztyn, to whom we are greatly indebted.  The first author is also grateful to Hudson Lima for helpful conversations. We would also like to thank the anonymous referee for her/his careful reading of the paper.

\vfill

\subsection*{Notation and conventions} We denote by $M$ a smooth manifold. By a distribution we mean a subspace assignment $p\in M\mapsto R_p\subseteq E_p$, where $E$ is a vector bundle over $M$. 
A distribution is said to be smooth when any $v\in R_p$ can be extended to a local section of $E$ taking values in $R$. The rank of a distribution is the assignment $p\mapsto\dim R_p$. A distribution is said to be regular if it is smooth and its rank is constant. Regular distributions are actually subbundles, but we will keep the term `regular distribution' when $E$ is $TM$ or $TM_{\C}$ and use subbundle otherwise.
We will omit the vector bundle $E$ when it is clear from the context.

Given the complexification of a bundle, $E_\C$, and a distribution $L\subset E_\C$ such that $L=\overline{L}$, we denote the real elements of $L$ by
$$ \re L := L\cap E$$
and call them the real part of $L$. 
We will denote the complexification of a map with the same symbol.

\newpage

\section{Generalized Geometry}
\subsection{The generalized tangent bundle}\label{sec:gen-tgt}
We consider the  {\em generalized tangent bundle} $\TM:=TM\oplus T^*M$ with its natural nondegenerate symmetric bilinear pairing
$$\langle X+\xi, Y+\eta\rangle=\frac{1}{2}(\eta(X)+\xi(Y)),$$
 and the Dorfman bracket \cite{dorfman1987dirac} on $\Gamma(\TM)$
$$[ X+\xi, Y+\eta]=[X,Y]+\lie_{X}\eta-\imath_{Y}d\xi,$$
for $X+\xi$, $Y+\eta \in \Gamma (\TM)$. The tuple $(\TM,\la\cdot,\cdot\ra ,[\cdot,\cdot], \pr_{TM})$ has the structure of a {\em Courant algebroid} \cite{liu1995manin}.

The {\em automorphisms} of $\TM$ are bundle automorphisms $F$ of $\TM$ covering $f\in \Diff(M)$ such that, for $u$, $v\in\Gamma(\TM)$,
\begin{enumerate}[a)]
\item $f^{*}\la F(u),F(v)\ra=\la u,v\ra$,
\item $F[u,v]=[F(u),F(v)]$,
\item $\pr_{TM} \circ F=f_{*}\circ \pr_{TM}$.
\end{enumerate}
We denote the group of automorphisms of $\TM$ by $\Aut(\TM)$.

\begin{example} \label{ex:aut-Courant}
Any $f\in \Diff(M)$ defines $\T f\in\Aut(\TM)$ by
$$\mathbb{T} f(X+\xi)=f_{*}X+(f^{-1})^{*}\xi.$$
Any two-form $B\in \Omega^{2}(M)$ defines a bundle automorphism $e^{B}$ by
$$e^{B}(X+\xi)=X+\xi+\imath_{X}B.$$
When $B$ is closed, $e^{B}\in \Aut(\TM)$ and we call it a {\em $B$-transformation}. 
 \end{example}

These examples generate all the automorphisms \cite[Prop. 2.2]{gualtieri:2004}: 
\begin{align*}
    \Diff(M)\ltimes \Omega^{2}_{cl}(M) & \cong \Aut(\TM)\\
    (f,B) & \mapsto \mathbb{T} f\circ e^{B}.
\end{align*}
 
 The action of a generalized vector field $X+\xi\in \Gamma(\TM)$ via the Dorfman bracket $[X+\xi, \;\cdot\;]$ defines an endomorphism of $\Gamma(\TM)$. By \cite{hu2009hamiltonian}, it integrates to a one-parameter subgroup of automorphisms of $\TM$ given by $\{\T\varphi_{s}\circ e^{\gamma_{s}}\}_{s\in \R}$, with $\{\varphi_s\}_{s\in \R}$ the one-parameter subgroup integrating $X$ (with the convention $X=\frac{d}{dt}|_{t=0}\varphi^{*}_{-t}$) and  $$\gamma_{s}=\int^{s}_{0} \varphi_{u}^{*}(-d\xi) du.$$
This means that 
$$ [X+\xi,Y+\eta] = \frac{d}{dt}\bigg\rvert_{t=0} \left( (\T\varphi_{s}\circ e^{\gamma_{s}})(Y+\eta) \right).$$

The bundle $\TCM:=(\TM)_\C$ equipped with the complexification of the pairing and bracket of $\TM$ has analogous properties to the Courant algebroid $\TM$. Just as in Example \ref{ex:aut-Courant}, the map $f\in \Diff(M)$ gives rise to  $\T f\in \Aut(\mathbb{T}_{\C} M)$, and a closed  $B\in \Omega^2_\C(M)$ to $e^{B}\in \Aut(\TCM)$, which we call a {\em complex B-transformation}.

\subsection{Dirac structures}\label{sec:Dirac}
A {\em Dirac structure} \cite{courant1990dirac} is a lagrangian subbundle $L\subset \TM$ that is involutive with respect to the Dorfman bracket (that is, $[\Gamma(L),\Gamma(L)]\subseteq \Gamma(L)$).

\begin{example}\label{ex:presymp-Poisson}
\normalfont
 The graphs of a presymplectic structure $\omega$ and a Poisson structure $\pi$ are Dirac structures:
 \begin{align*}
 L_{\omega}&=\{X+\imath_{X}\omega\;\;|\;\; X\in TM\}, & L_{\pi}=\{\pi(\alpha)+\alpha\;\;|\;\; \al\in T^*M\}.
\end{align*} 

\end{example} 
The {\em range distribution} $E$ of a lagrangian subbundle $L$ is $E:=\pr_{TM}L$, which is smooth but not necessarily regular. There exists a  skew-symmetric bilinear map $\varepsilon:E\times E\to \R$ such that 
 $$L=L(E,\varepsilon):=\{X+\xi\:|\: X\in E,\:\:\xi_{|E}=\imath_{X}\varepsilon\}.$$
The range distribution of a Dirac structure is integrable and $\varepsilon$ restricts to each leaf as a presymplectic form  \cite{courant1990dirac}, so it generalizes the symplectic foliation associated to a Poisson bivector. Furthermore:
\begin{proposition}[\cite{dufour2008local}]\label{sing_Dirac}
Let $L$ be a Dirac structure and a point $m\in M$. If the presymplectic leaf passing through $m$ is a single point, then on a neighbourhood of $m$, $L$ is the graph of a Poisson structure. 
\end{proposition}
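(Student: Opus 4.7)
The plan is to leverage the general form $L = L(E, \varepsilon)$ to show that the hypothesis forces $L$ to be transverse to $TM$ at $m$, and then to propagate this transversality to a neighborhood by openness.

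First I would observe that the presymplectic leaf through $m$ being a single point means that the range distribution satisfies $E_m = \pr_{TM}(L_m) = 0$. From the description $L = L(E,\varepsilon)$, any element of $L_m$ has the form $X + \xi$ with $X \in E_m = 0$ and $\xi|_{E_m} = 0$, which imposes no condition on $\xi \in T_m^*M$. Hence $L_m = T_m^*M$, and in particular $L_m \cap T_mM = 0$.

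Next, I would consider the vector bundle morphism $\pr_{T^*M}|_L : L \to T^*M$ between two bundles of the same rank $\dim M$. Its kernel at a point $p$ is exactly $L_p \cap T_pM$, which vanishes at $m$ by the previous step, so the morphism is an isomorphism at $m$. Since the locus where a bundle map between bundles of equal rank is an isomorphism is open (it is cut out by the non-vanishing of a determinant in any local trivialization), there is an open neighborhood $U$ of $m$ on which $\pr_{T^*M}|_L$ is a bundle isomorphism. Inverting it yields a smooth bundle map $\pi : T^*M|_U \to TM|_U$ such that $L|_U = \{\pi(\alpha) + \alpha \mid \alpha \in T^*M|_U\}$, and the isotropy of $L$ under the pairing $\langle\cdot,\cdot\rangle$ forces $\pi$ to be skew, i.e., a smooth bivector field on $U$.

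Finally, I would invoke the classical equivalence (Example \ref{ex:presymp-Poisson} together with the involutivity condition) that, for a bivector $\pi$, the Lagrangian subbundle $L_\pi$ is involutive under the Dorfman bracket if and only if $\pi$ is Poisson: since $L|_U = L_\pi$ is Dirac by hypothesis, $\pi$ is Poisson on $U$. No serious obstacle arises in this outline; the only care required is the semicontinuity step, namely transferring the pointwise condition $L_m \cap T_mM = 0$ to an open neighborhood via the rank of the bundle map $\pr_{T^*M}|_L$, after which the identification with a Poisson bivector is automatic.
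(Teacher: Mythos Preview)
Your argument is correct and is the standard one. Note, however, that the paper does not actually prove this proposition: it is stated with a citation to \cite{dufour2008local} and no proof is given, so there is no ``paper's own proof'' to compare against. Your write-up would serve perfectly well as a self-contained justification of the cited fact.
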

\noindent A Dirac structure is called {\em regular} when  $E$ is regular. We then have:
\begin{proposition}[\cite{courant1990dirac,gualtieri:2007}]\label{prop:E-regular} 
 A lagrangian subbundle $L(E,\varepsilon)$ is a regular Dirac structure if and only if $E$ is regular, $\varepsilon$ is a (smooth) bundle map and $d_E \varepsilon=0$, where $d_{E}$ denotes the differential along the directions of~$E$.
\end{proposition}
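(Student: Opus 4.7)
The plan is to work with the explicit parametrization $L(E,\varepsilon) = \{X+\xi \mid X\in E,\; \xi|_E = \imath_X\varepsilon\}$ and reduce both the smoothness condition and the involutivity of $L$ to corresponding conditions on the pair $(E,\varepsilon)$ via the Dorfman bracket.

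First I will handle smoothness. If $L$ is a smooth subbundle and $E$ is regular, then for any local smooth section $X \in \Gamma(E)$ a smooth lift $X+\xi \in \Gamma(L)$ satisfies $\varepsilon(X,Y) = \xi(Y)$ for $Y \in \Gamma(E)$. The right-hand side is smooth, and is independent of the lift since any two lifts differ by a 1-form vanishing on $E$; hence $\varepsilon$ is a smooth bundle map. Conversely, given $E$ regular and $\varepsilon \in \Gamma(\wedge^2 E^*)$ smooth, trivialize $E$ locally, pick smooth extensions of each $\imath_X\varepsilon$ to 1-forms on $M$, and check by a rank count that the resulting generators span a smooth rank-$\dim M$ lagrangian subbundle equal to $L(E,\varepsilon)$.

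Next I will address involutivity. Because $\pr_{TM}$ intertwines the Dorfman bracket with the Lie bracket, involutivity of $L$ immediately forces $E$ to be involutive, so that $d_E$ is defined on forms along $E$. For the main computation, take local sections $X+\xi,\; Y+\eta$ of $L$ and write
\[ [X+\xi, Y+\eta] = [X,Y] + \lie_X \eta - \imath_Y d\xi. \]
Since $[X,Y] \in \Gamma(E)$, this generalized vector field lies in $L(E,\varepsilon)$ iff $(\lie_X\eta - \imath_Y d\xi)|_E = \imath_{[X,Y]}\varepsilon$. Evaluating on $Z \in \Gamma(E)$, expanding with Cartan's formula, and substituting $\xi|_E = \imath_X\varepsilon$, $\eta|_E = \imath_Y\varepsilon$, the discrepancy between the two sides equals
\[ X\varepsilon(Y,Z) - Y\varepsilon(X,Z) + Z\varepsilon(X,Y) - \varepsilon([X,Y],Z) + \varepsilon([X,Z],Y) - \varepsilon([Y,Z],X) = (d_E\varepsilon)(X,Y,Z), \]
yielding the equivalence of involutivity of $L$ with $d_E\varepsilon = 0$.

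The main point requiring care is that the restriction $(\lie_X\eta - \imath_Y d\xi)|_E$ must depend only on $X,Y$ and $\varepsilon$, not on the chosen extensions of $\imath_X\varepsilon$ and $\imath_Y\varepsilon$ off $E$. This is where involutivity of $E$ enters: changing $\xi$ by a 1-form $\beta$ with $\beta|_E = 0$ modifies $\imath_Y d\xi$ by $\imath_Y d\beta$, whose evaluation on $Z \in \Gamma(E)$ involves only $Z\beta(Y) - Y\beta(Z) - \beta([Y,Z])$, each of which vanishes because $[Y,Z] \in \Gamma(E)$. Once this well-definedness is in hand, the remainder of the argument is routine bookkeeping in a foliated chart, and I do not anticipate any further difficulty.
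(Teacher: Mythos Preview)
The paper does not include a proof of this proposition; it is stated as a cited result from \cite{courant1990dirac,gualtieri:2007}. Your argument is correct and is essentially the standard proof: smoothness is handled by lifting local frames of $E$ and extending $\imath_X\varepsilon$ to $T^*M$, while involutivity reduces, via the explicit Dorfman bracket computation you outline, to the identity $(\lie_X\eta - \imath_Y d\xi)(Z) - \varepsilon([X,Y],Z) = (d_E\varepsilon)(X,Y,Z)$ for $X,Y,Z\in\Gamma(E)$, together with the observation that involutivity of $L$ forces involutivity of $E$ through the anchor. Your remark about well-definedness under change of extension is exactly the point that makes the converse direction work, and your treatment of it is correct; note only that the analogous check for $\eta\mapsto\eta+\gamma$ with $\gamma|_E=0$ (via $\lie_X\gamma$) should also be mentioned, though it follows by the same reasoning.
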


On the other hand, we call $L\cap TM$ the {\em null distribution} of a Dirac structure $L$. If $L\cap TM$ has constant rank, then it is smooth and integrable and its associated foliation is called the {\em null foliation}. 

Given a map $\varphi:N\to M$ and lagrangian subbundles $L\subset \TM$, $L'\subset \TN$, the {\em backward image} of $L$ is the (regular) distribution
$$\varphi^{!}L:=\{X+\varphi^{*}\xi \st \varphi_{*}X+\xi\in L\}\subset \TN,$$
and the {\em forward image} of $L'$ is the (regular) distribution 
$$\varphi_! L'=\{\varphi_* X+\xi\st X+\varphi^{*}\xi\in L'\}\subset \phi^*\TM.$$
For an inclusion $\iota:N\to M$, a sufficient condition for $\iota^! L\subset \TN$ to be a Dirac structure is the transversality  condition $\pr_{TM} L_{|N}+TN=TM_{|N}$ (see, for instance,  \cite{bursztyn-leyva}). 

\subsection{Generalized complex structures}\label{sec:gcs} A {\em generalized almost complex structure} \cite{gualtieri:2007} is a bundle map $\mathcal{J}:\TM\to \TM$
such that $\mathcal{J}^{2}=-1$ and $ \mathcal{J}^{*}=-\mathcal{J}$. If the lagrangian subbundle $\ker(\J-iId)\subset \TCM$ is involutive with respect to the Dorfman bracket we say that $\J$ is a {\em generalized complex structure}, which is equivalently given by an involutive lagrangian subbundle $L\subset \TCM$ such that $L\cap \overline{L}=0$.
\begin{example}\label{ex:LJ-Liw} A complex structure $J$ and a symplectic structure $\omega$ determine the generalized complex structures
\begin{align*}
 L_{J}&=T_{0,1}\oplus T_{1,0}^{*}, & L_{i\omega}=\{X+i\,\imath_{X}\omega\;\;|\;\; X\in T_\C M\}.
\end{align*} 
\end{example}
 A manifold admitting a generalized almost complex structure must be even dimensional, so we take $\dim M=2n$.

The main invariant of generalized complex structures is an integer-valued function called the {\em type}, which is defined, with the notation $E:=\pr_{T_\C M}L$, as
\begin{equation}\label{eq:type-gen-cplx}
\type L:=\cork E.    
\end{equation}
The type varies from $0$ to $n$. In Example \ref{ex:LJ-Liw}, the structure $L_{i\omega}$ is of constant type $0$, whereas $L_J$ is of constant type $n$. A point $m$ in $M$ is called of {\em complex type} if it is of type $n$, and it is called {\em regular} if there exists a neighborhood of $m$ with constant type.

The map $\cJ$ determines the Poisson bivector $\pi_{\J}=\pr_{TM}\J|_{T^{*}M}$, so a generalized complex structure gives a symplectic foliation. To recall Weinstein splitting-like theorems for generalized complex structures, we need the definition of product. Consider isotropic subbundles $K_{1}$ of $\TCM_1$ (or $\TM_1$) and $K_{2}$ of $\TCM_{2}$ (or $\TM_2$). Let $\pi_{i}:M_{1}\times M_{2}\to M_{i}$ denote the canonical projections. The bundle 
\begin{equation}\label{eq:prod-L1-L2}
    K_1\times K_2:=\pi_{1}^{*}K_{1}\oplus \pi_{2}^{*}K_{2}
\end{equation} is an isotropic subbundle of $\mathbb{T}_\C (M_{1}\times M_{2})$ (or $\mathbb{T}(M_{1}\times M_{2})$). For $L_1$ and $L_2$ generalized complex structures, the product $L_1\times L_2$ is a generalized complex structure.

By \cite{abouzaid2006local}, for any $m\in M$, there exists a neighborhood $U$, a closed two-form $B$, a symplectic structure $\omega$ and a generalized complex structure $L'$ of complex type at $m$ such that 
$$L|_{U}\cong e^{B}(L'\times L_{i\omega}).$$

Around a regular point $m$ of type $k$, we can be more precise  \cite{gualtieri:2004}:
$$L|_{U}\cong e^{B}(L_{J}\times L_{i\omega}),$$ 
where $J$ is the canonical complex structure on $\mathbb{C}^{k}$ and $\omega$ is the canonical symplectic structure of $\mathbb{R}^{2(n-k)}$.

\section{From generalized complex\\ to complex Dirac structures}

Let $M$ be a manifold with a generalized complex structure $L\subset \TCM$. Consider a submanifold $N\xhookrightarrow{\iota} M$. Under mild regularity conditions (see, e.g.,  Definition \ref{def:cplx-trans} below),  $\iota^{!}L\subset \TCN$ is a lagrangian and involutive subbundle. However, it is not necessarily a generalized complex structure, as the next examples show.
\begin{example}
Let $\omega\in \Omega^{2}(M)$ be a symplectic structure. Note that $\iota^{!}L_{i\omega}=L_{i\iota^{*}\omega}$ and so $\iota^{!}L_{i\omega}\cap \overline{\iota^{!}L_{i\omega}}=(\ker \iota^{*}\omega)_{\C}$, which is nonzero unless $N$ is a symplectic submanifold. 
\end{example}

\begin{example}
Let $J$ be a complex structure on $M$. Assume that $N$ is of codimension one and consider the $J$-invariant distribution $D:=TN\cap J(TN)$ over $N$. We have that $(D,J_{|D})$ is a CR structure of corank one in $N$, and $\iota^{!}L_{J}=L(\ker(J_{|D}-iId),0)$. Hence,  $\iota^{!}L_{J}\cap\overline{\iota^{!}L_{J}}=(\Ann D)_{\C}$, which has rank one.
\end{example}
 The subbundle $\iota^{!}L$ lies in a larger class than generalized complex:

\begin{definition}\label{def:cplx-Dirac} A {\em complex Dirac structure} is a lagrangian subbundle  $L\subset \TCM$ that is involutive (with respect to the Dorfman bracket).
\end{definition}
 \begin{example}\label{ex:complex-Dirac-1}
	A generalized complex structure is a complex Dirac 
structure $L$ such that $L\cap \overline{L}=\{0\}$. 	On the other hand, the complexification $L_\C$ of a real Dirac structure $L\subset \TM$  is a complex Dirac structure that satisfies $L_\C=\overline{L_\C}$ . The bundle $L_{i\omega}$ as in Example \ref{ex:LJ-Liw}  for a presymplectic form $\omega\in\Omega^2_{cl}(M)$ is a complex Dirac structure.
	\end{example}
	
	\begin{example}\label{ex:complex-Dirac-2}
    A CR-structure $(D,J)$, consisting of a regular distribution $D\subseteq TM$ and $J\in \End(D)$ such that $J^2=-\Id$, determines, with the notation $D_{1,0}:=\ker(J-i\Id)\subset D_\C$, the complex Dirac structure
    $$ L_{(D,J)} := D_{1,0} \oplus \Ann D_{1,0}.$$
\end{example}
 
\begin{proposition}\label{codim_1} Let $N\subseteq M$ be a submanifold of codimension $r$. Let $L$ be a lagrangian subbundle of $\TCM$ such that $L\cap \overline{L}=\{0\}$. Then $$\rk (\iota^{!}L\cap\overline{\iota^{!}L})\leq r.$$ 
\end{proposition}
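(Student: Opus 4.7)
The plan is to show, pointwise on $N$, that there is an injective linear map from $(\iota^!L \cap \overline{\iota^!L})_p$ into a quotient of the rank-$r$ bundle $(\Ann TN)_\C$, which bounds the rank of the intersection by $r$. First I would unpack the meaning of $v \in \iota^!L \cap \overline{\iota^!L}$: writing $v = X + \alpha$ uniquely with $X \in T_\C N$ and $\alpha \in T_\C^*N$, membership in $\iota^!L$ gives some $\xi \in T_\C^*M|_N$ with $\iota^*\xi = \alpha$ and $\iota_*X + \xi \in L$, while membership in $\overline{\iota^!L}$ gives some $\eta$ with $\iota^*\eta = \alpha$ and $\iota_*X + \eta \in \overline{L}$ (the $T_\C N$-component in both descriptions must equal $\pr_{T_\C N}v$, hence coincides). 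In particular, $\xi - \eta \in (\Ann TN)_\C$.

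I would then define
\[ \Phi: \iota^!L \cap \overline{\iota^!L} \to (\Ann TN)_\C / Q, \qquad v \mapsto [\xi - \eta], \]
where $Q := (L \cap (\Ann TN)_\C) \oplus (\overline{L} \cap (\Ann TN)_\C)$; the sum is direct since $L \cap \overline{L} = \{0\}$. To verify well-definedness, I would observe that any two lifts $\xi, \xi'$ of $\alpha$ with $\iota_*X + \xi, \iota_*X + \xi' \in L$ differ by $\xi - \xi' = (\iota_*X + \xi) - (\iota_*X + \xi') \in L$, which also lies in $(\Ann TN)_\C$ since $\iota^*(\xi - \xi') = 0$; analogously for $\eta$. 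Hence the class $[\xi - \eta]$ modulo $Q$ does not depend on the chosen lifts.

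The heart of the proof is injectivity of $\Phi$. Supposing $\Phi(v) = 0$, write $\xi - \eta = a + b$ with $a \in L \cap (\Ann TN)_\C$ and $b \in \overline{L} \cap (\Ann TN)_\C$, and set $u := \iota_*X + \xi - a \in L$ and $u' := \iota_*X + \eta + b \in \overline{L}$. Then $u - u' = (\xi - \eta) - (a + b) = 0$, so $u = u' \in L \cap \overline{L} = \{0\}$, which forces $X = 0$ and $\xi = a \in (\Ann TN)_\C$, hence $\alpha = \iota^*\xi = 0$ and $v = 0$. Since $(\Ann TN)_\C$ has complex rank $r$, the target of $\Phi$ has rank at most $r$ at every point of $N$, and injectivity of $\Phi$ yields the desired bound. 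The main subtle point is this injectivity step, where the hypothesis $L \cap \overline{L} = \{0\}$ is used crucially twice: to make $Q$ a direct sum, and to collapse $u = u'$ to zero.
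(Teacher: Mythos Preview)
Your argument is correct and takes a genuinely different route from the paper's. The paper argues by contradiction: assuming $r+1$ linearly independent elements in $\iota^!L\cap\overline{\iota^!L}$ at a point, it uses that this intersection is conjugation-invariant to choose a \emph{real} basis, lifts each basis element to $L$ in the form $X_j+\tau_j+i\eta_j$ with $\eta_j\in\Ann T_mN$, finds a nontrivial (real) relation among the $\eta_j$, and concludes that the corresponding combination is a nonzero real element of $L$, contradicting $L\cap\overline{L}=0$.

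Your approach is more structural and stays entirely in the complex world: you build an explicit $\C$-linear injection $\Phi$ of $\iota^!L\cap\overline{\iota^!L}$ into a quotient of $(\Ann TN)_\C$. This avoids the reality trick altogether and in fact yields the sharper bound $\rk(\iota^!L\cap\overline{\iota^!L})\le r-\dim Q$, where $Q=(L\cap(\Ann TN)_\C)+(\overline{L}\cap(\Ann TN)_\C)$. Two small remarks: linearity of $\Phi$ should be stated (it follows since sums of lifts are lifts of sums); and although you note that $Q$ is a direct sum, that fact is not actually needed---any decomposition $\xi-\eta=a+b$ with $a\in L\cap(\Ann TN)_\C$, $b\in\overline{L}\cap(\Ann TN)_\C$ suffices for your injectivity step. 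The essential use of $L\cap\overline{L}=0$ is the collapse $u=u'\Rightarrow u=0$.
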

\begin{proof}
Suppose that $\rk(\iota^{!}L\cap\overline{\iota^{!}L})>r$ at $m\in N$, so there exist real elements $$X_{1}+\xi_{1},\ldots , X_{r+1}+\xi_{r+1}\in \iota^{!}L_{m},$$
linearly independent, with $X_{j}\in T_{m}N$ and $\xi_{j}\in T^{*}_{m}N$. By definition of $\iota^{!}L$, there exist $\tau_{j}\in T^{*}_{m}M$ with $\iota^* \tau_{j}=\xi_{j}$ and $\eta_{j}\in \Ann T_{m}N$, such that
 $$X_{1}+\tau_{1}+i\eta_{1},\ldots ,X_{r+1}+\tau_{r+1}+i\eta_{r+1}\in L_{m}.$$
 Since $\dim \Ann T_m N=r$, there exist a non-trivial linear combination   $\sum^{r+1}_{j=1}c_{j}\eta_{j}=0$ with $c_j\in\R$. Consequently, $$\sum^{r+1}_{j=1}c_{j}(X_{j}+\tau_{j}+i\eta_{j})=\sum^{r+1}_{j=1}c_{j}(X_{j}+\tau_{j})$$ is real in $L_{m}$, so must vanish (since $L\cap \overline{L}=\{0\}$) and, thus,  $\sum^{r+1}_{j=1}c_{j}(X_{j}+\xi_{j})=0,$ which yields a contradiction.
\end{proof}
\begin{corollary} For a codimension-one submanifold $N \xhookrightarrow{\iota}M$ we have $$\rk(\iota^{!}L\cap\overline{\iota^{!}L})=1.$$
\end{corollary}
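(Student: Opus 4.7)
The plan is to combine Proposition~\ref{codim_1}, which already gives $\rk(\iota^{!}L\cap\overline{\iota^{!}L})\leq 1$, with a parity argument that rules out rank $0$. Indeed, if the rank were $0$ at some point $m\in N$, then $\iota^{!}L_m$ would be a lagrangian subspace of $\TCN_m$ satisfying $\iota^{!}L_m\cap\overline{\iota^{!}L_m}=\{0\}$, i.e., a linear generalized complex structure on $T_mN\oplus T^*_mN$. Since generalized complex structures exist only on even-dimensional spaces (as recalled in Section~2.3) and $\dim_{\R}N=2n-1$ is odd (where $\dim M=2n$), this would yield a contradiction.

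For the argument to make sense I first need to verify that $\iota^{!}L_m$ really does have complex dimension $2n-1$, so that it is lagrangian in $\TCN_m$. Setting $E_m:=\pr_{T_\C M}L_m$, the hypothesis that $L$ is generalized complex gives $L_m\oplus\overline{L_m}=\TCM_m$ and consequently $E_m+\overline{E_m}=T_\C M_m$. Because $N$ has codimension one, $E_m$ cannot be contained in $T_\C N_m$ (otherwise $\overline{E_m}$ would be too, forcing $T_\C M_m\subseteq T_\C N_m$), and hence $E_m+T_\C N_m=T_\C M_m$, so $\Ann(E_m)\cap(\Ann T_mN)_\C=0$. Since $L$ is lagrangian, every $\xi\in L_m\cap T^*_\C M_m$ annihilates $E_m$, which combined with the previous identity gives $L_m\cap(\Ann T_mN)_\C=0$. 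Consequently the map $L_m\cap(T_\C N\oplus T^*_\C M)_m\to\iota^{!}L_m$, $X+\xi\mapsto X+\iota^*\xi$, is injective; a direct dimension count on its source yields $\dim_\C\iota^{!}L_m\geq 2n-1$, and since $\iota^{!}L_m$ is isotropic in $\TCN_m$ equality must hold.

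Having established that $\iota^{!}L_m$ is lagrangian, the parity contradiction applies pointwise and gives $\rk(\iota^{!}L\cap\overline{\iota^{!}L})\geq 1$, which combined with Proposition~\ref{codim_1} yields the claimed equality. The main obstacle I anticipate is the cleanness step $L_m\cap(\Ann T_mN)_\C=0$, which encodes the transversality of $\iota$ with respect to the generalized complex structure and is what guarantees that $\iota^{!}L$ has the expected rank; once this is secured, the remainder of the proof is a formal parity argument.
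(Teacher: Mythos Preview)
Your proof is correct and follows the same approach as the paper: the upper bound from Proposition~\ref{codim_1} combined with the parity obstruction (an odd-dimensional vector space admits no linear generalized complex structure) forces the rank to equal~$1$. The paper's own argument is just these two lines.

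The additional verification you carry out, namely that $\iota^{!}L_m$ is lagrangian in $\T_\C N_m$, is correct but not needed: the backward image of a lagrangian subspace under \emph{any} linear map is always lagrangian (this is a standard fact about linear Dirac structures, see e.g.\ \cite{courant1990dirac} or the linear algebra underlying the paper's Section~\ref{sec:Dirac}). Transversality is only relevant for the \emph{smoothness} of $\iota^{!}L$ as a subbundle, not for its pointwise rank. So while your computation of $L_m\cap(\Ann T_mN)_\C=0$ and the ensuing dimension count are valid, they reprove a general fact rather than something specific to this situation.
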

\begin{proof}
Since $\dim N$ is odd, then $\iota^{!}L\cap\overline{\iota^{!}L}\neq \{0\}$ and so the corollary follows from the proposition.
\end{proof}

\section{Invariants and classification\\ of complex Dirac structures}
In this section we introduce a set of invariants for complex Dirac structures which will allow us to characterize their local geometry. 

\subsection{The real index}\label{sec:real-index}

The following definition stems from \cite{kopzcynski-trautman}.
\begin{definition} The real index of any subbundle $L\subset \TCM$ is 
$$ \ri L := \rk(L\cap \oL),$$
which is a function that we usually denote by $r$.
\end{definition}

Let $L\subset \TCM$ be a lagrangian subbundle. Consider the distribution
$$ K:= \re ( L \cap \oL).$$
Its orthogonal distribution is given by $K^\perp = \re (L+\oL).$ We have $r=\ri L=\rk K$  and
 \begin{equation*}
 \left( \frac{\phantom{\perp}K^\perp\;}{K\;} \right)_\C \cong \frac{L + \oL}{L\cap \oL}. 
 \end{equation*}

The pairing on $\TM$ descends to a pointwise pairing on $K^\perp/K$ of signature $(\dim M-r,\dim M-r)$ (\cite[Prop. 2.28]{tiago:thesis}). When the real index $r$ is everywhere constant, the distributions $K$ and $K^\perp$ become bundles of rank $r$ and $2\dim M - r$, respectively. 
In this case, $K^{\perp}/K$ becomes a euclidean vector bundle (vector bundle with a nondegenerate pairing). In general, $K^{\perp}/K$ does not inherit a bracket from $\TM$. However, there are cases where $K^{\perp}/K$ can be reduced to a Courant algebroid \cite{bursztyn2007reduction,zambon2008reduction}. In this case, $L+L\cap \oL$ reduces to a complex Dirac structure, possibly with real index zero. In Section \ref{gen_fol}, we will see an example where $K^{\perp}/K$ is a Courant algebroid.

Many results, like the next two, are stated for lagrangian subbundles but are usually used for complex Dirac structures.

\begin{proposition}
\label{genmap}
A lagrangian subbundle $L\subset \TCM$ with constant real index $r$, is equivalent to the choice of an $r$-dimensional isotropic subbundle $K\subset \TM$ and a bundle map $\mathcal{J}:K^{\perp}/K \to K^{\perp}/K$ such that $\mathcal{J}^{2}=-1$ and $\mathcal{J}^{*}+\mathcal{J}=0$. 
\end{proposition}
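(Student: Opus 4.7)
The plan is to reduce the statement, via the construction $K = \re(L\cap \oL)$, to the standard pointwise correspondence between generalized almost complex structures on $K^\perp/K$ and their $+i$-eigenbundles in $(K^\perp/K)_\C$. Concretely, I would show that lagrangians in $\TCM$ with constant real index $r$ are in bijection with pairs (isotropic $K$ of rank $r$) + (generalized almost complex structure on $K^\perp/K$).

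First, for the forward direction, given $L$ of constant real index $r$, set $K := \re(L\cap \oL)$. Since $L\cap \oL$ is closed under conjugation, $K_\C = L\cap \oL$, so $K$ has (constant) rank $r$ and, as the real part of an isotropic subbundle, is isotropic. Since $L$ is lagrangian, $L\subseteq (L\cap\oL)^\perp = K^\perp_\C$, and analogously $\oL\subseteq K^\perp_\C$; a dimension count ($\dim L + \dim \oL - \dim(L\cap \oL) = 2\dim M - r = \dim K^\perp_\C$) then gives $L+\oL = K^\perp_\C$. Using the displayed isomorphism in the paper, the image $\tilde L$ of $L$ under the projection $K^\perp_\C \to (K^\perp/K)_\C$ is a subbundle satisfying $\tilde L\cap \overline{\tilde L}=0$ and $\tilde L + \overline{\tilde L}=(K^\perp/K)_\C$, and is lagrangian with respect to the nondegenerate pairing induced on $K^\perp/K$ (of signature $(\dim M-r,\dim M-r)$, as recalled above).

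Second, I would invoke the standard linear algebra fact that on a real vector bundle with a nondegenerate pairing of split signature, giving such a lagrangian $\tilde L$ in its complexification is equivalent to giving a bundle endomorphism $\mathcal{J}$ with $\mathcal{J}^2=-\Id$ and $\mathcal{J}^*=-\mathcal{J}$, via $\tilde L = \ker(\mathcal{J}-i\Id)$. Applied to $K^\perp/K$, this produces the desired $\mathcal{J}$ from $L$.

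For the reverse direction, given $(K,\mathcal{J})$, I would set $\tilde L := \ker(\mathcal{J}-i\Id)\subseteq (K^\perp/K)_\C$ and define $L$ as the preimage of $\tilde L$ under $p:K^\perp_\C \to (K^\perp/K)_\C$. Then $K_\C\subseteq L\subseteq K^\perp_\C$, and since $K$ is isotropic and $\tilde L$ is lagrangian in $(K^\perp/K)_\C$, a routine symplectic-reduction argument shows $L$ is lagrangian in $\TCM$. Moreover, $L\cap \oL \supseteq K_\C$ and $p(L\cap \oL)\subseteq \tilde L\cap \overline{\tilde L}=0$, forcing $L\cap \oL = K_\C$, so $L$ has real index $r$. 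Checking that these two constructions are mutually inverse is then immediate from the definitions.

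The main delicate point is verifying that everything assembles smoothly in the bundle setting: this uses the constant real index hypothesis, which ensures $K$, $K^\perp$, and $K^\perp/K$ are honest vector bundles carrying a smooth nondegenerate induced pairing, so that the fibrewise classification globalizes. The core algebraic content reduces to the standard equivalence $\mathcal{J}\leftrightarrow \ker(\mathcal{J}-i\Id)$ for generalized almost complex structures, applied to $K^\perp/K$ rather than to $TM\oplus T^*M$ itself.
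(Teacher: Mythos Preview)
Your proposal is correct and follows essentially the same route as the paper: pass to the quotient $K^\perp/K$, observe that the image $\tilde L$ (the paper's $L_0$) is lagrangian with zero real index, and invoke the standard equivalence with generalized almost complex structures. Your write-up is in fact more complete than the paper's, which only sketches the forward direction; you also supply the inverse construction and verify the bijection, which the word ``equivalent'' in the statement demands.
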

\begin{proof}
 Given such $L$, the distribution $L_0=L+L\cap \oL\subset (L+\oL)/(L\cap \oL)$ is a lagrangian subbundle of $(\Kp)_{\C}$ with zero real index, that is, $L_{0}\cap \overline{L_{0}}=0$. So, $L_0$ defines a map $\J:\Kp\to \Kp$ such that $\J^{2}=-Id$. Moreover, since $L_0$ is lagrangian, $\J$ preserve the pairing and thus $\J^*+\J=0$.
 
Given an isotropic subbundle $K\subset\TM$ and a map $\J:\Kp\to\Kp$ satisfying the conditions of the statement, we retrieve a complex Dirac structure by taking  $L=q^{-1}(\ker(\J_{\C}-i\Id))$, where $q:K^{\perp}\to \Kp$ is the quotient map. Its real index equals $\rk K$ and  is  hence constant.
\end{proof}

\begin{corollary}\label{lemma:pointwisecJ}
A lagrangian subbundle $L\subset \TCM$ determines a distribution $K\subset \TM$ together with a pointwise complex structure $\mathcal{J}\in \End(K^{\perp}/K)$ which is moreover skew-symmetric, $\mathcal{J}^{*}+\mathcal{J}=0$.
\end{corollary}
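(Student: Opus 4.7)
The plan is to observe that Proposition \ref{genmap} is really a pointwise linear-algebraic statement: the only place where constant real index was used is to ensure that $K$ and $K^\perp$ assemble into honest subbundles, and hence that $\mathcal{J}$ is a bundle map. If we drop the constancy assumption and work fiberwise, the construction still goes through, yielding only a distribution and a pointwise (rather than smooth) endomorphism.

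More concretely, I would proceed as follows. Fix a point $m\in M$. Set $r_m:=\ri L_m = \dim_\C(L_m\cap\overline{L_m})$ and $K_m:=\re(L_m\cap\overline{L_m})\subseteq \T_m M$, a real subspace of dimension $r_m$ which is isotropic for the pairing on $\T_m M$ since $L_m\cap\overline{L_m}$ is isotropic (as a subspace of the lagrangian $L_m$). Then $K_m^\perp = \re(L_m + \overline{L_m})$, and the identification
\[
\left(\frac{K_m^\perp}{K_m}\right)_\C \;\cong\; \frac{L_m+\overline{L_m}}{L_m\cap\overline{L_m}}
\]
from Section \ref{sec:real-index} is purely linear algebra at $m$. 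Consequently the image $L_0^{(m)}$ of $L_m$ in this quotient is a complex lagrangian subspace with $L_0^{(m)}\cap \overline{L_0^{(m)}}=0$, and by repeating the argument in the proof of Proposition \ref{genmap} (which uses no smoothness) we obtain a complex structure $\mathcal{J}_m$ on $K_m^\perp/K_m$ satisfying $\mathcal{J}_m^2=-\Id$ and $\mathcal{J}_m^*+\mathcal{J}_m=0$.

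Letting $m$ vary, the assignments $m\mapsto K_m$ and $m\mapsto \mathcal{J}_m$ give a distribution $K\subseteq \T M$ together with a pointwise complex structure $\mathcal{J}\in \End(K^\perp/K)$ with the stated skew-symmetry, as required.

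The only subtlety, and the one worth flagging, is that without the assumption of constant real index the rank of $K_m$ may jump with $m$, so $K^\perp/K$ is not literally a vector bundle and $\mathcal{J}$ is not a bundle map; this is exactly why the statement is phrased with the words ``distribution'' and ``pointwise complex structure''. I do not anticipate any technical obstacle beyond making these fiberwise conventions explicit, since the content of Proposition \ref{genmap} is already a linear construction applied fiber by fiber.
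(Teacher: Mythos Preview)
Your proposal is correct and is exactly the intended argument: the paper states this as a direct corollary of Proposition~\ref{genmap} with no separate proof, precisely because the construction there is fiberwise linear algebra and the constancy of the real index is only needed to upgrade the pointwise data to a smooth bundle map. Your explicit unpacking of the pointwise construction, together with the remark that $K^\perp/K$ is only a distribution when $r$ varies, is a faithful expansion of what the paper leaves implicit.
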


 The map $\mathcal{J}$ in Corollary \ref{lemma:pointwisecJ} is pointwise a linear generalized complex structure. By the obstruction for their existence \cite[Prop. 4.5]{gualtieri:2004}:

\begin{proposition}
The dimension of a manifold $M$ admitting a lagrangian subbundle of $\TCM$ with real index $r$ must satisfy 
$$ \dim M \equiv r \mod 2.$$
Thus, the parity of the real index is constant.
\end{proposition}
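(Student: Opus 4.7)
The statement is a pointwise application of the linear obstruction for generalized complex structures cited just before the proposition. First I would fix a point $m \in M$ and apply Corollary \ref{lemma:pointwisecJ} to the lagrangian subbundle $L \subset \TCM$: at $m$, this produces an isotropic subspace $K_m \subseteq \TM_m$ of dimension $r(m) := \ri L (m)$ together with a skew-symmetric endomorphism $\J_m \in \End(K_m^\perp/K_m)$ satisfying $\J_m^2 = -\Id$. In other words, $\J_m$ is a linear generalized complex structure on $K_m^\perp/K_m$.

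Next I would read off the relevant dimensions. Since $K_m$ is isotropic of dimension $r(m)$ inside the $2\dim M$-dimensional space $\TM_m$, whose pairing has signature $(\dim M, \dim M)$, the quotient $K_m^\perp/K_m$ has dimension $2\dim M - 2r(m)$; and as already observed in the paragraph preceding Proposition \ref{genmap}, the induced pairing on $K_m^\perp/K_m$ has signature
\[
(\dim M - r(m),\, \dim M - r(m)).
\]

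Now I would invoke the cited obstruction \cite[Prop.~4.5]{gualtieri:2004}: a pseudo-Euclidean space of signature $(k,k)$ admits a linear generalized complex structure only when $k$ is even. Applied to $K_m^\perp/K_m$ with $k = \dim M - r(m)$, this forces $\dim M - r(m)$ to be even, which is exactly the claimed congruence
\[
\dim M \equiv r(m) \pmod 2.
\]
Since $\dim M$ does not depend on $m$, the parity of $r(m)$ is forced to be constant on $M$, giving the second assertion.

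The argument poses no real obstacle; it is a conceptual rather than technical proof. The only subtlety worth flagging is that Corollary \ref{lemma:pointwisecJ} is stated pointwise and does not require constant real index (unlike Proposition \ref{genmap}), which is precisely what lets us conclude the congruence at each point independently before deducing the constancy of parity.
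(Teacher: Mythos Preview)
Your proposal is correct and follows exactly the approach the paper takes: the paper's argument is just the one-sentence remark preceding the proposition, namely that the pointwise $\cJ$ from Corollary~\ref{lemma:pointwisecJ} is a linear generalized complex structure on $K^\perp/K$, so the obstruction \cite[Prop.~4.5]{gualtieri:2004} applies. You have simply spelled out the dimension count and the pointwise nature of the argument more explicitly than the paper does.
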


\begin{example}
	For a generalized complex structure, the real index is zero and the associated map $\cJ$ corresponds to the generalized complex structure itself. For the complexification of a real Dirac structure, $L_\C$, the real index is $\dim M$ and the map $\cJ$ is zero. For $L_{i\omega}$ and $L_{(D,J)}$ as in Examples \ref{ex:complex-Dirac-1} and \ref{ex:complex-Dirac-2},
	\begin{align*}
	L_{i\omega}\cap \overline{L_{i\omega}}&=(\ker \omega)_\C,& \ri L_{i\omega}	&= \rk (\ker \omega),\\
	L_{(D,J)}\cap \overline{L_{(D,J)}} &= (\Ann D)_\C, & \ri L_{(D,J)} & = \cork D.
	\end{align*}

\end{example}

\subsection{The order and the (normalized) type}\label{sec:order-type}

 Analogously to the real case, a lagrangian subbundle $L\subset\TCM$ determines a complex range distribution and a skew-symmetric map $\varepsilon:E\to E^*$, 
\begin{align*}
E&:=\pr_{T_\C M} L,& \varepsilon& \in \wedge^2 E^*,
\end{align*}
such that $L=L(E,\varepsilon)$.

In order to describe lagrangian subbundles we shall associate real data, which can be interpreted geometrically. The following definitions associate real distributions to any complex distribution $E\subseteq T_\C M$, although we work on the case of a lagrangian subbundle $L$ with $E=\pr_{T_\C M} L$. Define the distributions
\begin{align*}
\begin{split}
\Delta &:= \re (E \cap \oE),\\
 D &:= \re ( E + \oE).
\end{split}
\end{align*}

Let $J:D/\Delta\to D/\Delta$ be the real part of the map on $(D/\Delta)_{\C}$ having $+i$-eigenspace $E/(E\cap \overline{E})$ and $-i$-eigenspace $\overline{E}/(E\cap \overline{E})$. The triple $(D,\Delta, J)$ is thus real data recovering $E$.

On the other hand, 
we consider the restriction of the two-form $\varepsilon$ on $\Delta$ and look at its imaginary part
$$ \omega_\Delta = \im \varepsilon_{|\Delta},$$
which is a possibly degenerate two-form $\omega_\Delta\in\wedge^2 \Delta^*$. Thus, at any $m\in M$, we have that $\Delta$ inherits a linear presymplectic two-form and $D/\Delta$ is endowed with a linear complex map.\footnote{We have ignored $\re\varepsilon$ since it will be regarded as a $B$-transformation.}

This motivates the introduction of new invariants involving the size of $D$ and $\Delta$, which we define in terms of both real and complex data.

\begin{definition}\label{def:order}
 The order of a lagrangian subbundle $L\subset \TCM$ is
 $$\order L := \cork D = \cork (E+\oE),$$
 which is a function that we usually denote by $s$.
\end{definition}
\begin{definition}\label{def:type}
 The type of a  lagrangian subbundle $L\subset \TCM$  is
 \begin{align*}
\type L :&=  \frac{1}{2} (\rk D - \rk \Delta)  \\
              &  =  \cork_{E+\oE} E,  
 \end{align*} 
which is a function that we usually denote by $k$.
\end{definition}
Note that, for a generalized complex structure, this definition coincides with the type as defined in \eqref{eq:type-gen-cplx}. We have the identity
\begin{equation}\label{eq:type-order}
\type L + \order L = \cork E.
\end{equation}

Finally, we see how the real index and the order are related. Define
$$\Delta_0:=\ker \omega_\Delta\subseteq TM.$$

\begin{lemma}\label{lemma:proj-K}
For the distributions $K$ and $K^\perp$ we have
\begin{align*}
    \pr_{TM} K^\perp &= D,& \pr_{TM} K &= \Delta_0.
\end{align*}
\end{lemma}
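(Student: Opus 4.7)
The plan is to pass to complexifications, using that $L+\overline{L}$, $L\cap\overline{L}$, $E+\overline{E}$ and $E\cap\overline{E}$ are all closed under complex conjugation, while $\pr_{T_\C M}$ commutes with conjugation. In particular, for any conjugation-invariant subspace $R\subseteq \TCM$ one has $(\re R)_\C = R$ and $\pr_{TM}(\re R) = \re(\pr_{T_\C M} R)$. Applied to $R = L+\overline{L}$, this gives $\pr_{T_\C M}(L+\overline{L}) = E+\overline{E}$, and taking real parts yields the first equality $\pr_{TM} K^\perp = D$.

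For the second equality, the inclusion $\pr_{TM} K \subseteq \Delta_0$ is direct. Given $X+\xi\in K$, the reality of $X+\xi$ forces $\xi\in T^*M$ and $X\in \re(E\cap\overline{E}) = \Delta$, while $X+\xi\in L = L(E,\varepsilon)$ gives $\xi(Y) = \varepsilon(X,Y)$ for every $Y\in E$. Testing on $Y\in\Delta\subseteq E$, the left-hand side is real, so $\omega_\Delta(X,Y) = \im\varepsilon(X,Y) = 0$, placing $X$ in $\Delta_0$.

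The reverse inclusion is the main point. Given $X\in \Delta_0$, I would construct a real $\xi\in T^*M$ with $\xi|_E = \imath_X\varepsilon$; then $X+\xi\in L$, and by conjugation $X+\xi\in L\cap \overline{L}$, so $X\in \pr_{TM} K$. Equivalently, the $\C$-linear functional $\imath_X\varepsilon:E\to\C$ must extend to a conjugation-equivariant $\C$-linear functional on $T_\C M$. The only obstruction sits on $E\cap\overline{E} = \Delta_\C$, where consistency requires $\overline{\varepsilon(X,Y)} = \varepsilon(X,\overline{Y})$ for $Y\in\Delta_\C$. Writing $Y=Y_1+iY_2$ with $Y_j\in\Delta$, this reduces to $\varepsilon(X,Y_j)\in\R$, i.e., $\omega_\Delta(X,Y_j) = 0$, which is exactly the hypothesis $X\in \Delta_0$. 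Once the functional is conjugation-equivariant on $E+\overline{E} = D_\C$, it descends to a real form on $D$, which one extends arbitrarily to a real $\xi$ on $TM$. The main subtlety I anticipate is precisely this extension step: recognizing that the obstruction is localized on $\Delta_\C$ and coincides exactly with the kernel condition for $\omega_\Delta$.
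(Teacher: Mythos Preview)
Your proof is correct and follows essentially the same route as the paper's: both arguments compute $\pr_{TM}K^\perp$ directly via $\pr_{T_\C M}(L+\overline L)=E+\overline E$, and both handle $\pr_{TM}K=\Delta_0$ by showing that for $X\in\Delta_0$ the forms $\varepsilon(X)$ on $E$ and $\overline{\varepsilon}(X)$ on $\overline E$ agree on $E\cap\overline E$ (precisely because $\omega_\Delta(X,\cdot)=0$), hence glue to a conjugation-invariant form on $E+\overline E$ which is then extended to a real $\xi\in T^*M$. Your write-up is somewhat more explicit about why the gluing obstruction lives on $\Delta_\C$ and is killed by the $\Delta_0$ hypothesis, but the mathematical content is identical.
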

\begin{proof}
For the first part, $\pr_{TM} K^\perp=\pr_{TM}(L+\oL)=\pr_{TM}(E+\oE)= D$.

For the second part, take $X+\alpha \in K = \re (L\cap \oL)$, so $\alpha_{|E}=\varepsilon(X)$ and $\alpha_{|\oE} = \overline{\varepsilon}(X)$. When restricting to $\Delta$, $\varepsilon(X)_{|\Delta}=\overline{\varepsilon}(X)_{\Delta}$, so its imaginary part $\omega_\Delta(X)$ vanishes, that is, $X\in \Delta_0$. Conversely, for $X\in \Delta_0$, the forms $\varepsilon(X)\in E^*$ and $\overline{\varepsilon}(X)\in \overline{E}^*$ extend uniquely to a form  $\beta\in (E+\oE)^*$ such that $\overline{\beta}=\beta$. We extend it further to $\alpha\in T_\C^*M$ such that $\overline{\alpha}=\alpha$, that is, $\alpha \in T^*M$ so that $X+\al\in K$.
\end{proof}

\begin{lemma}\label{lemma:ri-order-ker}
For $L\subset \TCM$ a lagrangian subbundle, the distribution $K$ fits into the short exact sequence
\begin{equation}\label{eq:exact-seq-K}
 0\longrightarrow  \Ann D \longrightarrow K \xrightarrow[]{\pr_{TM}} \Delta_0 \longrightarrow  0
\end{equation}
and, consequently, 
\begin{equation}\label{eq:ri=or+rk-ker}
\ri L = \order L + \rk \Delta_0.
\end{equation}
\end{lemma}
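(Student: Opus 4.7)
The plan is to exploit Lemma \ref{lemma:proj-K}, which already gives surjectivity of $\pr_{TM}\colon K\to\Delta_0$, and then identify the kernel as $\Ann D$. The rank-count consequence will be immediate from the short exact sequence once this is in place.

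First I would note that the kernel of $\pr_{TM}\colon K\to\Delta_0$ is exactly $K\cap T^*M$, i.e.\ the real covectors lying in $L\cap\oL$. The main step is to show $K\cap T^*M=\Ann D$, where $\Ann D$ denotes the (real) annihilator of $D$ inside $T^*M\subseteq\TM$. For this, I take $\alpha\in T^*M$ and observe that, since $L=L(E,\varepsilon)$, the condition $0+\alpha\in L$ is equivalent to $\alpha_{|E}=0$; similarly $\alpha\in\oL$ iff $\alpha_{|\oE}=0$. Hence $\alpha\in L\cap\oL$ iff $\alpha$, viewed as a $\C$-linear form on $T_\C M$, vanishes on $E+\oE$.

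Next I would translate the vanishing on $E+\oE$ into vanishing on $D=\re(E+\oE)=(E+\oE)\cap TM$. One direction is obvious. For the other, given $v\in E+\oE$ write $v=v_1+iv_2$ with $v_j\in TM$; since $E+\oE$ is closed under complex conjugation, $\bar v\in E+\oE$, and therefore $v_1=\tfrac12(v+\bar v)$ and $v_2=\tfrac1{2i}(v-\bar v)$ both lie in $(E+\oE)\cap TM=D$. Thus if $\alpha\in T^*M$ vanishes on $D$, then $\alpha(v)=\alpha(v_1)+i\alpha(v_2)=0$, showing $\alpha_{|E+\oE}=0$. Combined with the previous paragraph, this gives $K\cap T^*M=\Ann D$, and the sequence \eqref{eq:exact-seq-K} is therefore exact.

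For the rank identity \eqref{eq:ri=or+rk-ker}, I would just take ranks in the short exact sequence: $\rk K=\ri L$ by definition, while $\rk\Ann D=\dim M-\rk D=\cork D=\order L$. Hence $\ri L=\order L+\rk\Delta_0$. The only subtle point is the identification of the kernel with $\Ann D$ rather than with the complex annihilator $\Ann(E+\oE)$; everything else is an application of Lemma \ref{lemma:proj-K} and linear algebra, so I do not anticipate any real obstacle.
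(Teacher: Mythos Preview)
Your argument is correct and follows the same overall strategy as the paper: use Lemma \ref{lemma:proj-K} for surjectivity onto $\Delta_0$, identify the kernel $K\cap T^*M$ with $\Ann D$, and read off the rank identity. The only difference is in how the identification $K\cap T^*M=\Ann D$ is obtained: the paper does it via the pairing, observing that for $\alpha\in T^*M$ one has $\alpha\in K=(K^\perp)^\perp$ iff $\la\alpha,K^\perp\ra=0$ iff $\la\alpha,\pr_{TM}K^\perp\ra=0$, and then invokes $\pr_{TM}K^\perp=D$ from Lemma \ref{lemma:proj-K}; you instead unpack $L=L(E,\varepsilon)$ and argue directly that $\alpha\in L\cap\oL$ iff $\alpha$ annihilates $E+\oE=D_\C$. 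Both are short and valid; the paper's version is slightly more uniform in that it uses Lemma \ref{lemma:proj-K} for both ends of the sequence, while yours makes explicit why the real annihilator of $D$ coincides with the complex annihilator of $E+\oE$.
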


\begin{proof}
We first look at $\ker \pr_{TM}|_{K}=K\cap T^*M$, which is $\Ann(pr_{TM} K^\perp)$, since $\alpha\in T^*M$ satisfies $\la \alpha, pr_{TM} K^\perp\ra = 0$ if and only if $\la \alpha, K^\perp\ra = 0$, that is $\alpha\in (K^\perp)^\perp=K$. By Lemma \ref{lemma:proj-K}, the  sequence follows.
\end{proof}

From \eqref{eq:ri=or+rk-ker}, it follows that 
\begin{equation}\label{eq:order-ri}
0\leq \order L \leq \ri L.    
\end{equation}

With the notation $r$ for real index, $s$ for order, $k$ for type, and $n$ for the function such that $\dim M=2n+r$, we have 
\begin{align}\label{eq:dim_assoc_dist}
     \rk D&=2n+r-s, \nonumber \\  \rk \Delta&=2(n-k)+r-s,\\  \rk \Delta_0&=r-s. \nonumber
\end{align}

\begin{example}
 For a generalized complex structure, the order is always zero and the type varies between $0$ and $\dim M/2$. For the complexification of a Dirac structure, $L_\C$, the order is $\cork \pr_{TM} L$ and the type is always zero. For $L_{i\omega}$ and $L_{(D,J)}$ as in Examples \ref{ex:complex-Dirac-1} and \ref{ex:complex-Dirac-2}, 
 \begin{align*}
 \order L_{i\omega} & = 0     ,& \order L_{(D,J)} & =\cork D,\\
 \type L_{i\omega} & = 0    ,& \type L_{(D,J)} & = n.
 \end{align*}
 \end{example}
 
 Equation \eqref{eq:ri=or+rk-ker} also generalizes the fact that $\omega_\Delta$ is nondegenerate for generalized complex structures \cite[Prop. 4.4]{gualtieri:2004}.

Regarding the product \eqref{eq:prod-L1-L2}, a direct computation shows the following.
\begin{lemma}\label{ri_order_adtv}
Let $L_{1}$, $L_{2}\subset \TCM$ be two lagrangian subbundles over the manifolds $M_{1}$ and $M_{2}$ with real parts $K_{1}$ and $K_{2}$, respectively. Then the real part of $L_{1}\times L_{2}$ is $K_1 \times K_2$ and, as a consequence,
\begin{align*}
\ri (L_1\times L_2)&=\ri L_1+\ri L_2,\\
\order (L_1\times L_2)&=\order L_1+\order L_2.
\end{align*}
\end{lemma}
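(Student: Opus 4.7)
The plan is to exploit the product decomposition of the complexified generalized tangent bundle. Under the canonical projections $\pi_i:M_1\times M_2\to M_i$, we have
$$\mathbb{T}_\C(M_1\times M_2) = \pi_1^*\mathbb{T}_\C M_1 \oplus \pi_2^*\mathbb{T}_\C M_2,$$
so any element of $L_1\times L_2 = \pi_1^*L_1\oplus \pi_2^*L_2$ decomposes uniquely as $u_1+u_2$ with $u_i\in \pi_i^*L_i$, and the same holds for $\overline{L_1\times L_2}=\pi_1^*\overline{L_1}\oplus\pi_2^*\overline{L_2}$. Complex conjugation respects the decomposition, since the projections $\pi_i$ are real maps.

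First I would compute the intersection. An element $u_1+u_2\in L_1\times L_2$ lies in $\overline{L_1\times L_2}$ if and only if each $u_i\in \pi_i^*(L_i\cap \overline{L_i})$, by uniqueness of the decomposition. Hence
$$(L_1\times L_2)\cap \overline{L_1\times L_2} = (L_1\cap \overline{L_1})\times (L_2\cap \overline{L_2}),$$
and since the real part operation also respects the direct sum decomposition, taking real parts yields $K_{L_1\times L_2} = K_1\times K_2$. The real index claim is then immediate from additivity of ranks under direct sums: $\ri(L_1\times L_2)=\rk(K_1\times K_2)=\rk K_1+\rk K_2=\ri L_1 +\ri L_2$.

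For the order, I would set $E_i:=\pr_{T_\C M_i}L_i$ and observe that the range distribution of the product is
$$E:=\pr_{T_\C(M_1\times M_2)}(L_1\times L_2)= \pi_1^*E_1\oplus \pi_2^*E_2,$$
so that $E+\overline{E}=\pi_1^*(E_1+\overline{E_1})\oplus \pi_2^*(E_2+\overline{E_2})$. Computing coranks inside $T_\C(M_1\times M_2)=\pi_1^*T_\C M_1\oplus \pi_2^*T_\C M_2$ gives
$$\order(L_1\times L_2)=\cork(E+\overline{E})=\cork(E_1+\overline{E_1})+\cork(E_2+\overline{E_2})=\order L_1+\order L_2.$$

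Since everything reduces to the uniqueness of the direct sum decomposition of $\mathbb{T}_\C(M_1\times M_2)$ and the compatibility of this decomposition with conjugation, projection to $T_\C$, and taking real parts, there is no real obstacle; the main point to verify carefully is just that the real part functor commutes with the product construction, which is what makes the identity $\re(L_1\times L_2)=K_1\times K_2$ work. This is precisely the justification the author alludes to by calling the proof ``a direct computation''.
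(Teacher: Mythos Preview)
Your proof is correct and is exactly the direct computation the paper alludes to: the paper gives no proof beyond the phrase ``a direct computation shows the following,'' and what you wrote is that computation. The only minor remark is stylistic---the last paragraph, where you explain what would need to be verified, could simply be absorbed into the argument itself, since you have in fact already verified it.
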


\subsection{Properties of invariants and associated distributions}

We look now at the properties of the distributions and the three invariants introduced in Sections \ref{sec:real-index} and \ref{sec:order-type}. These properties just depend on the smoothness of the lagrangian subbundle.

To start with, the distributions $E$ and $D$ are smooth. Indeed, since $E$ is the image of $L$ under the anchor map, we have that $E$ is smooth. The distribution $\oE$ is then smooth and so $E+ \overline{E}$ is. Since $D=\pr_{TM} (E+\oE)$, we have that $D$ is also smooth.

Recall that the rank of a smooth distribution is a lower semicontinuous function (around any point the rank stays the same or drops). 

\begin{lemma}\label{lem:upper-semi}
	The real index and the order are upper semicontinuous. If the order is constant, the type is upper semicontinuous.
\end{lemma}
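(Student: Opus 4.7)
The plan is to exploit the elementary fact that the rank of any smooth distribution is lower semicontinuous, and to express each of the three invariants, or rather the quantities we need to control, as either a corank of a smooth distribution or as a difference $2\rk L-\rk(L+\oL)$ where the minuend is locally constant.

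First I would observe that the following distributions are smooth: $E=\pr_{T_\C M} L$ (image of the smooth subbundle $L$ under the anchor), $\oE$, $E+\oE$ and $D=\pr_{TM}(E+\oE)$ (as already noted right before the lemma), and also $L+\oL$ (since its sections are generated by sections of $L$ and $\oL$). Consequently, $\rk E$, $\rk(E+\oE)=\rk D$ and $\rk(L+\oL)$ are lower semicontinuous.

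The upper semicontinuity of the order is then immediate, since
\[
\order L = \cork D = \dim M - \rk D.
\]
For the real index, I would use that $L$ itself is a subbundle and so $\rk L$ is locally constant; combining this with the standard dimension identity for sums and intersections,
\[
\rk(L\cap\oL) = \rk L + \rk \oL - \rk(L+\oL) = 2\rk L - \rk(L+\oL),
\]
the lower semicontinuity of $\rk(L+\oL)$ yields that $\ri L=\rk(L\cap\oL)$ is upper semicontinuous.

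Finally, assuming that the order is constant, $\rk(E+\oE)=\dim M-s$ is constant on $M$, and the identity
\[
\type L = \cork_{E+\oE} E = \rk(E+\oE) - \rk E
\]
together with the lower semicontinuity of $\rk E$ gives the upper semicontinuity of $\type L$. The only mild subtlety in the whole argument is that neither $L\cap\oL$ nor $E\cap\oE$ need be smooth, so one cannot argue with these intersections directly; the trick is precisely to route the argument through the sums $L+\oL$ and $E+\oE$, which are smooth, and use the rank formula to transfer semicontinuity to the intersections.
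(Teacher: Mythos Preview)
Your proof is correct and follows essentially the same approach as the paper: expressing each invariant via the corank of a smooth distribution (or equivalently via $\rk(L+\oL)$ for the real index and via the identity $\type L+\order L=\cork E$ for the type) and invoking lower semicontinuity of the rank. The only cosmetic difference is that the paper writes $2\dim M-\rk(L+\oL)$ in place of your $2\rk L-\rk(L+\oL)$, using that $L$ is lagrangian.
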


\begin{proof}
	The real index is $\rk(L\cap \oL) = 2\dim M - \rk (L+\oL)$, whereas the order is $\cork D$. Since $L+\oL$ and $D$ are smooth distributions, the first part follows. The second part follows similarly from \eqref{eq:type-order}. 
\end{proof}

The following result plays a key role in Section \ref{sec:assoc-Dirac}.

\begin{proposition}\label{prop:Delta-smooth}
	If the order is constant, the distribution $\Delta$ is smooth.
\end{proposition}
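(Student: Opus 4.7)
The strategy is to realize $\Delta$ as the image of a smooth $\R$-linear bundle map out of a smooth vector bundle, which automatically yields smoothness. The constant-order hypothesis enters through Lemma \ref{lemma:proj-K}: it makes $D=\pr_{TM}(E+\oE)$ a genuine smooth subbundle of $TM$, of constant rank $\dim M -s$. Exploiting the real structure $T_\C M = TM \oplus iTM$, I will split the anchor $\pr_{T_\C M}|_L$ into its real and imaginary parts and arrange $\Delta$ as the image of the latter restricted to the kernel of the former.

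Concretely, I introduce the two $\R$-linear smooth bundle maps
$$\phi \colon L \longrightarrow TM,\ X+\alpha \longmapsto \re X,\qquad \psi \colon L \longrightarrow TM,\ X+\alpha\longmapsto \im X,$$
where $\re X, \im X \in TM$ denote the real and imaginary parts of $X \in T_\C M$. The first key step is to show $\phi(L)=\re E = D$. The inclusion $\re E \subseteq D$ is immediate from $2\re X = X + \bar X \in (E+\oE)\cap TM$. For the reverse inclusion, given $U \in D \subseteq E+\oE$, decompose $U = A + B$ with $A\in E$, $B\in \oE$; then $A + \bar B$ lies in $E$, and a direct computation using $\bar U = U$ yields $\re(A+\bar B)=U$.

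Since $s$ is constant, $\phi\colon L\to D$ is a surjective bundle map between smooth real bundles of constant rank, and so $\ker\phi$ is a smooth $\R$-subbundle of $L$. By construction, its elements are precisely $iV+\alpha\in L$ with $V\in TM$; applying $\psi$ to such elements gives $\psi(iV+\alpha)=V$. Because $E$ is a complex subspace, $iV\in E$ if and only if $V\in E$, hence $\psi(\ker\phi) = E\cap TM = \Delta$. As the image of a smooth vector bundle under a smooth bundle map is always a smooth distribution, $\Delta$ is smooth.

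The step I anticipate as the main obstacle is the identification $\phi(L)=D$; the forward inclusion is routine, but the reverse requires the mildly non-obvious symmetrization trick $A+\bar B \in E$ used above to exhibit a preimage inside $E$ rather than merely inside $D_\C$. Everything else reduces to the standard facts that surjective bundle maps of constant rank have smooth kernels and that smooth bundle maps have smooth images.
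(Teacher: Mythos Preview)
Your proof is correct and follows essentially the same strategy as the paper's: the paper uses the imaginary-part map $\Phi = \im \circ \pr_{T_\C M} : L \to TM$ (your $\psi$), shows its image is $D$ via the same symmetrization trick (since $E$ is $\C$-linear, $\{\re X : X\in E\}=\{\im X : X\in E\}$), deduces that $\ker\Phi = L\cap(TM\oplus T^*_\C M)$ is a smooth subbundle by constant order, and then observes $\pr_{TM}(\ker\Phi)=E\cap TM=\Delta$. You run the identical argument with the roles of $\re$ and $\im$ interchanged; a minor point is that your citation of Lemma~\ref{lemma:proj-K} is unnecessary, as the only input needed is that $D$ is a smooth distribution of constant rank, which follows directly from its definition and the constant-order hypothesis.
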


\begin{proof}
	Consider the imaginary part of the projection to $T_\C M$,
	\begin{equation*}
	\Phi=\im \circ\, \pr_{T_\C M} : L \to TM,
	\end{equation*}
	whose image is $\im(\pr_{T_\C M} L)=\im(E)=\im(E+\oE)=D$.

	Since the order is constant, $D$ is a vector bundle and hence  the kernel of $\Phi$,
	\begin{equation*}
	 L\cap (TM\oplus T_{\C}^* M),
	\end{equation*}
	is also a vector bundle. As $\pr_{TM}\ker\Phi=\Delta$, the proposition holds.
\end{proof}
We will see in Section \ref{sec:ex-order-change} an example where the order is not constant and $\Delta$ is not smooth.

Another distribution playing an important role in the theory is $K$. 
\begin{proposition}\label{K_lie_algebroid}
If $L$ has constant real index, then $K$ is a Lie algebroid whose orbits are precisely the leaves of $\Delta_0$.
\end{proposition}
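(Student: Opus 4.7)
\medskip

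The plan is to verify the three defining features of a Lie algebroid for $K$ in sequence, and then identify its orbits with $\Delta_0$.

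First, I would observe that the constancy of the real index turns $L\cap\overline{L}$ into a subbundle of $\TCM$ of rank $r$, invariant under conjugation; hence $K=\re(L\cap\overline{L})=(L\cap\overline{L})\cap\TM$ is a genuine real subbundle of $\TM$ of rank $r$, not just a distribution. This is the step that uses the constant real index hypothesis.

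Next, I would check involutivity under the Dorfman bracket. Let $u,v\in\Gamma(K)$. Since $K\subset L$, we have $u,v\in\Gamma(L)$ and involutivity of $L$ gives $[u,v]\in\Gamma(L)$. Because $u,v$ are real, they also lie in $\Gamma(\overline{L})$, and the same argument yields $[u,v]\in\Gamma(\overline{L})$. The Dorfman bracket of real sections of $\TCM$ is real (it is the complexification of the real Dorfman bracket), so $[u,v]\in\Gamma(L\cap\overline{L})\cap\Gamma(\TM)=\Gamma(K)$. Moreover, $K$ is isotropic because it sits inside the lagrangian $L$; on isotropic subbundles the symmetric part $d\langle u,v\rangle$ of the Dorfman bracket vanishes, so the bracket restricts to a skew-symmetric Lie bracket on $\Gamma(K)$. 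Together with the anchor $\pr_{TM}|_K$, the usual Leibniz identity is inherited from $\TM$, giving $K$ the structure of a Lie algebroid.

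It remains to identify the orbits with the leaves of $\Delta_0$. By Lemma \ref{lemma:proj-K}, $\pr_{TM}K=\Delta_0$, so the image of the anchor is precisely $\Delta_0$; in particular $\Delta_0$ is the smooth (possibly singular) distribution spanned by anchor-images of sections of $K$. The general theory of Lie algebroids then says that $K$ induces a singular foliation on $M$ whose leaves are exactly the orbits and whose tangent distribution is $\Delta_0$. This gives the desired identification.

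The only delicate point is verifying that $[u,v]$ lands in $\Gamma(\overline{L})$ as well — one has to be careful that involutivity of $L$ transfers to $\overline{L}$, but since the Dorfman bracket commutes with complex conjugation, $\overline{L}$ is involutive whenever $L$ is. Once this is in place, everything else is formal: the remaining ingredients are the reality of brackets of real sections, the isotropy of $K$ inside the lagrangian $L$, and the standard fact that any Lie algebroid integrates to a singular foliation whose leaves are the images of the anchor-generated distribution.
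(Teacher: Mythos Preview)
Your proof is correct and follows exactly the approach the paper indicates: the paper's own proof is the single sentence ``This result follows from the involutivity of $K$ and Lemma \ref{lemma:proj-K},'' and you have simply unpacked both ingredients carefully (why $K$ is a subbundle, why it is Dorfman-involutive via $L$ and $\overline{L}$, why isotropy makes the restricted bracket a Lie bracket, and why the anchor image is $\Delta_0$). There is no substantive difference in strategy.
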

\begin{proof}
This result follows from the involutivity of $K$ and Lemma \ref{lemma:proj-K}.
\end{proof}

\subsection{Pointwise description of complex Dirac structures}
\label{sec:pointwise}
The real index, order and type determine the pointwise geometry of the complex Dirac structure up to $B$-transform.

\begin{proposition}\label{prop:linear-normal-form}
 Let $L\subset \TCM$ be a lagrangian subbundle on a manifold $M$ of dimension $2n+r$. At a a point $m$ where
 \begin{align*}
 \ri L & = r, & \order L & = s, & \type L &=k,
  \end{align*}
there exists a complement $N$ of $\Delta$ in $T_m M$, and $B\in\wedge^2 T^*_m M$, such that, at the point $m$, 
$$ L\cong e^B(L_{i\omega_{\Delta}}\times L_{(C,J)}),$$ where
$L_{i\omega_{\Delta}}$ is associated to the two-form $\omega_{\Delta}\in \wedge^2 \Delta^*_m$, and $L_{(C,J)}$ is an almost CR-structure on $N$.
\end{proposition}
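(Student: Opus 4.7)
The plan is to work pointwise at $m$ and set $V:=T_mM$, reducing the statement to finding a real 2-form $B\in \wedge^2 V^*$ that intertwines $L$ with a product of a presymplectic piece on $\Delta$ and an almost CR piece on a transversal to $\Delta$. First I would choose any complement $N$ of $\Delta$ in $V$, giving $V=\Delta\oplus N$, and set $C:=N\cap D$. Since $\Delta\subseteq D$, this yields a splitting $D=\Delta\oplus C$ and a canonical isomorphism $C\cong D/\Delta$; I transport the complex structure $J$ from $D/\Delta$ to $C$ through this isomorphism (still denoted $J$) and let $T_{1,0}:=\ker(J-i\Id)\subseteq C_\C\subseteq N_\C$. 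I further fix any real complement $W$ of $C$ in $N$, so that $V=\Delta\oplus C\oplus W$.

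Next I would identify $E=\pr_{T_\C M}L=\Delta_\C\oplus T_{1,0}$: by definition $E\cap\oE=\Delta_\C$ and $E+\oE=D_\C=\Delta_\C\oplus C_\C$, while $E/(E\cap\oE)$ is the $+i$-eigenspace of $J$ on $(D/\Delta)_\C$, which corresponds to $T_{1,0}\subseteq C_\C$. Then I form $L':=L_{i\omega_\Delta}\times L_{(C,J)}$ as in \eqref{eq:prod-L1-L2}; a direct inspection shows $L'=L(E,\varepsilon')$, where $\varepsilon'\in\wedge^2 E^*$ equals $i\omega_\Delta$ (complex-bilinearly extended) on $\Delta_\C\times\Delta_\C$ and vanishes both on $\Delta_\C\times T_{1,0}$ and on $T_{1,0}\times T_{1,0}$.

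Since $L=L(E,\varepsilon)$ and $L'=L(E,\varepsilon')$ share the same range, it suffices to produce $B\in\wedge^2 V^*$ whose complex-bilinear restriction to $E$ equals $\varepsilon-\varepsilon'$; then $L=e^B L'$. On $\Delta\times\Delta$, this difference is already real-valued because $\omega_\Delta=\im\varepsilon|_\Delta$ by the very definition of $\omega_\Delta$, so the $\Delta$-block extends directly as a real 2-form on $\Delta$. For the remaining two blocks, $\Delta_\C\times T_{1,0}$ and $T_{1,0}\times T_{1,0}$, I reconstruct real 2-forms on $\Delta\times C$ and on $C\times C$ from the prescribed complex-bilinear data: writing $Z=X-iJX\in T_{1,0}$ for $X\in C$, this amounts to solving for the values of a real 2-form on pairs of the form $(u,X),(u,JX)$ (resp.\ $(X_1,X_2),(X_1,JX_2),(JX_1,JX_2)$) in terms of the real and imaginary parts of $\varepsilon(u,Z)$ (resp.\ $\varepsilon(Z_1,Z_2)$). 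Finally I extend $B$ by zero across the $W$-block to obtain a real 2-form on all of $V$.

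The step requiring most care is the reconstruction of the $T_{1,0}\times T_{1,0}$-block: one needs to verify simultaneously the reality condition and skew-symmetry of the resulting real 2-form, using $J^2=-\Id$ and the fact that $\varepsilon$ is already complex-bilinear and skew on $T_{1,0}$. This is a straightforward linear-algebra check, and no closedness of $B$ is required since the claim is pointwise.
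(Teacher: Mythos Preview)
Your argument is correct and genuinely different from the paper's. The paper does not compare $L(E,\varepsilon)$ with $L(E,\varepsilon')$ directly; instead it passes to the quotient $K^\perp/K$, where $L$ induces a linear generalized complex structure $\widetilde{L}$, invokes Gualtieri's pointwise splitting \cite[Thm.~3.6]{gualtieri:2007} to write $\widetilde{L}\cong e^{B'}(L_{i\omega'}\times L_J)$, and then recovers $L$ as $\iota_! p^! \widetilde{L}$, treating separately the case where the short exact sequence \eqref{eq:exact-seq-K} for $K$ does not split (an extra $B$-transform is used to reduce to the split case). Your route is more elementary: once you identify the common range $E=\Delta_\C\oplus T_{1,0}$, the problem becomes purely the realization of $\varepsilon-\varepsilon'\in\wedge^2 E^*$ as the restriction of a real $B\in\wedge^2 V^*$, and your block-by-block construction (with the formula $B(X_1,X_2)=\tfrac12\re\,\varepsilon(X_1-iJX_1,X_2-iJX_2)$ working for the $C\times C$ block) settles this without any appeal to the generalized complex case. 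What the paper's approach buys is conceptual: it exhibits the CR piece as the complex-type factor of the induced generalized complex structure on $K^\perp/K$, and makes the role of $\Delta_0=\ker\omega_\Delta$ explicit. What yours buys is self-containment and the freedom to start from an \emph{arbitrary} complement $N$ of $\Delta$, rather than one produced by the auxiliary splitting of $\widetilde{L}$.
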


For its proof, we shall use the inclusion $\iota$ and the projection $p$ in 
\begin{equation*}
\begin{tikzcd} D \arrow[r, "p"]\arrow[d, "\iota"']& \dfrac{D}{\Delta_0} \\ TM & \end{tikzcd}\end{equation*}
together with the map $q:\Ann \Delta_0 \to (D/\Delta_0)^*$ such that $\iota^*=p^*\circ q$.

With sequence \eqref{eq:exact-seq-K} in mind, we say that $K$ splits when
$$ K =\Delta_0\oplus \Ann D.$$
We then have $K^\perp = D\oplus \Ann \Delta_0$ and, consequently,
$$\frac{\phantom{\perp}K^\perp\;}{K\;} \cong \frac{D}{\Delta_0}  \oplus \left( \frac{D}{\Delta_0} \right)^*,$$
where $[X+\xi] \in K^\perp/K$ maps to $pX+q\xi$. The pointwise generalized complex structure on $K^\perp/K$ (Corollary \ref{lemma:pointwisecJ}) corresponds  to $$\widetilde{L}:=\{pX+q\xi \st X+\xi \in L\},$$
whose associated symplectic subspace is pointwise  $\Delta/\Delta_0$.

\begin{proof}[Proof of Proposition \ref{prop:linear-normal-form}]
We work pointwise. We assume first that $K$ splits. With the notation above, we have that $\iota_!p^! \widetilde{L}=L$, since
$$\iota_!p^! \widetilde{L}=\{X+\xi \st \iota^*\xi=p^*\eta \textup{ for some } \eta \textup{ with }  pX+\eta\in \widetilde{L}\}$$
which clearly contains $L$ and they are both lagrangian. From the pointwise splitting for generalized complex structures (\cite[Thm. 3.6]{gualtieri:2007}), there exists a splitting $D/\Delta_0=\Delta/\Delta_0\oplus (C+\Delta_0)/\Delta_0$ and $B'\in\wedge^2 (D/\Delta_0)^*$ such that
$$\widetilde{L}\cong e^{B'}(L_{i\omega'}\times L_{J}).$$
We then have $p^! \widetilde{L}\cong e^{p^*B'}(L_{i\omega}\times L_J)$ on $D=\Delta\oplus C$ with $\omega=(p^*\omega')_{|\Delta}$ a presymplectic structure on $\Delta$, and $J$ a complex structure on $C\cong (C+\Delta_0)/\Delta_0$.  Finally, we look at $\iota_!p^!\widetilde{L}$: we choose any $N$ containing $C$, via $\iota^C:C\to N$, such that $TM=\Delta\oplus N$. For  $B\in \wedge^2 T^*M$ any extension of $p^*B'$ we have a natural injective map
 \begin{align*}
 e^{B}(L_{i\omega}\times \iota^C_! L_{J}) &\to \iota_!p^! \widetilde{L}\\
    e^B( X+\pi_\Delta^*i\omega(X)+Y+\pi_N^*\alpha )&\mapsto e^{p^*B'}(X+Y+i\omega(X)+\alpha).
 \end{align*}
Since they are both lagrangian subspaces, this map is an isomorphism. As, always at $m$, $\ker \omega=\Delta_0$, we have $L_{i\omega}\cong L_{i\omega_{\Delta}}$ so we conclude:
$$\iota_!p^! \widetilde{L}\cong e^{B}(L_{i\omega}\times \iota^C_! L_{J}) = e^{B}(L_{i\omega_\Delta}\times L_{(C,J)}),$$ with $\omega_\Delta\in \wedge^2\Delta^*_m$ and $(C,J)$ an almost CR-structure on $N$.

When $K$ does not split, sequence \eqref{eq:exact-seq-K} determines a map $$h:\Delta_0 \to T^*M/\Ann D\cong D^*$$  satisfying $h(x)(x)=0$ for $x\in\Delta_0$, as $K$ is isotropic. By taking a complement of $\Delta_0\subseteq D$ where we set the map to vanish, we get $h\in \wedge^2 D^*$, which we extend to $B\in \wedge^2 T^*M$. We then have that for $e^{-B}L$, its subbundle $K$  splits and we apply the first part of the proof.\end{proof}

\subsection{Extremal type}\label{sec:extremal} We study now complex Dirac structures with constant real index and order whose type is $0$ or maximal.

\begin{example}\label{ex:complexification-Dirac}
   A regular real Dirac structure   $L(\Delta,\omega)$, defines the complex Dirac structure
    $L(\Delta_\C,i\omega).$
    It has real index $r=\rk \ker \omega + \cork \Delta$, order $s=\cork \Delta$ and~type $0$. If $ \ker \omega$ is regular, $L(\Delta_{\C}, i\omega_{\C})$ has constant real index and order.
\end{example}

 This example is key to describe type $0$ complex Dirac structures.

\begin{proposition}\label{prop:type0}
 Let $L$ be a complex Dirac structure with constant real index $r$ and order $s$ over a $(2n+r)$-dimensional manifold. If $\type L=0$, then $L=e^{B}L(\Delta_{\C},i\omega)$,  with $\Delta$ a corank-$r$ involutive distribution, $\omega\in \wedge^{2}\Delta^{*}$ a $d_\Delta$-closed form with $(r-s)-$dimensional kernel, and $B$ a $d_{\Delta}$-closed real two-form (that is, $B|_{\Delta}$ is $d_{\Delta}$-closed). Locally, we can choose $B$ to be closed.
\end{proposition}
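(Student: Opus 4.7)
The plan is to identify $L$ explicitly as a $B$-transform of the complexification of real leafwise Dirac-type data, by splitting the complex tensor $\varepsilon$ into real and imaginary parts and then translating the involutivity of $L$ into the stated conditions.

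First I would exploit $\type L = 0$: since $\cork_{E+\oE} E = 0$, we have $E = E+\oE$, hence $E = \oE = \Delta_\C$ where $\Delta := \re E$. Constant order makes $\Delta = D$ a smooth regular subbundle of $TM$ of rank $2n+r-s$. Writing $L = L(\Delta_\C, \varepsilon)$ with $\varepsilon \in \wedge^2 \Delta^*_\C$, I would split $\varepsilon = B_\Delta + i\omega$ into its real and imaginary parts, with $B_\Delta, \omega \in \wedge^2 \Delta^*$. Extending $B_\Delta$ arbitrarily to a real 2-form $B \in \Omega^2(M)$ (e.g.\ by zero off a chosen complement of $\Delta$), a direct verification would give $L = e^B L(\Delta_\C, i\omega)$: in the prescription $\xi|_{\Delta_\C} = \imath_X \varepsilon$ only the restriction to $\Delta_\C$ matters, and the $e^B$-shift contributes precisely the missing $\imath_X B_\Delta$, while leaving transverse components of 1-forms free.

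Next I would translate involutivity using the straightforward complex analogue of Proposition~\ref{prop:E-regular}: the lagrangian $L(\Delta_\C, \varepsilon)$ is involutive if and only if $\Delta_\C$ is involutive (equivalently, $\Delta$ is Frobenius-integrable) and $d_{\Delta_\C} \varepsilon = 0$. Splitting the second condition into real and imaginary parts yields $d_\Delta B_\Delta = 0$ and $d_\Delta \omega = 0$. By construction $\omega$ coincides with the form $\omega_\Delta = \im \varepsilon|_\Delta$ of Section~\ref{sec:order-type}, so Lemma~\ref{lemma:ri-order-ker} gives $\dim \ker \omega = \rk \Delta_0 = r-s$.

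Finally, for the local refinement, I would work in Frobenius coordinates $(x^1, \ldots, x^{2n+r-s}, y^1, \ldots, y^s)$ adapted to $\Delta$. The leafwise closedness $d_\Delta B_\Delta = 0$ together with contractibility of small plaques yields, via the leafwise Poincar\'e lemma, a leafwise primitive $\alpha \in \Gamma(\Delta^*)$ with $d_\Delta \alpha = B_\Delta$; regarded as a 1-form on $M$ with no $dy$-components, $d\alpha$ is closed on $M$ and satisfies $(d\alpha)|_\Delta = B_\Delta$. Because the action of $e^B$ on $L(\Delta_\C, i\omega)$ depends only on $B|_\Delta$, replacing $B$ by $d\alpha$ preserves the identity $L = e^B L(\Delta_\C, i\omega)$ and produces a closed representative. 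The main obstacle I expect is the second step: justifying the complex version of Proposition~\ref{prop:E-regular} and checking that $d_{\Delta_\C}\varepsilon = 0$ decouples cleanly into the two real conditions; the rest is $B$-field bookkeeping (especially the observation that $e^B$ only sees $B|_\Delta$) and a standard leafwise Poincar\'e lemma.
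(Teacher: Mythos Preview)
Your proposal is correct and follows essentially the same route as the paper: observe that $\type L=0$ forces $E=\Delta_\C$, split $\varepsilon$ into real and imaginary parts, and invoke (the complex analogue of) Proposition~\ref{prop:E-regular} to obtain involutivity of $\Delta$ together with $d_\Delta B_\Delta=0$ and $d_\Delta\omega=0$; the kernel computation via Lemma~\ref{lemma:ri-order-ker} is exactly what the paper intends. The only difference is in the local refinement: the paper handles it tersely by working in a foliated chart and choosing a suitable extension, whereas you spell out a leafwise Poincar\'e lemma argument to produce a closed representative $B=d\alpha$; your version is more explicit but amounts to the same idea.
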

\begin{proof}
Let $L$ be $L(E,\varepsilon)$. Since $L$ has constant real index, order and type, $\Delta$ and $D$ are regular distributions. As $\type L=0$, the distribution $E$ is real and so $E=\Delta_{\C}$. Then $\varepsilon$ decomposes into its real and imaginary parts: $\varepsilon=B'+i\omega$, with $B'$, $\omega\in \wedge^2 \Delta^*$. By Proposition \ref{prop:E-regular}, we have that $B'$ is $d_\Delta$-closed,  $L(\Delta_{\C},i\omega)$ is involutive, and, for any extension $B$ of $B'$, $L=e^B L(\Delta_{\C},i\omega)$.
Locally, we obtain $B$ closed by considering a foliated chart and extending $B'$ to be constant in the directions of the distribution and vanish on the complementary directions.
\end{proof}

The maximal type, in dimension $2n+r$ with real index $r$,   is $n$. In order to describe this case, we introduce transverse CR structures\footnote{This name was used, with a different definition, in the context of foliations on CR-manifolds \cite{dragomir1997transversally}.}.
\begin{definition}\label{def:transverse-CR}
A transverse CR structure $(S,R,J)$ consists of regular distributions $R\subseteq S\subseteq TM$,  with $R$  integrable, and a bundle map $J:S/R\to S/R$, such that $J^2=-\Id$ and, for the projection $q:TM\to TM/R$, the distribution 
\begin{equation}\label{eq:E_J}
E=q^{-1}(\ker(J-i\Id)) \subset T_\C M 
\end{equation} 
is regular and involutive.
\end{definition}

Transverse CR structures include, as particular cases,  CR structures (when $R=0$) and transverse holomorphic structures (when $S=TM$). Given a transverse CR structure $(S,R,J)$, it can be proved \cite[Lem. 2.76]{aguero} that around any point,  there exists a neighbourhood $U$ where the foliation associated to $R_{|U}$ is simple and its leaf space carries a CR structure. 

    For a transverse CR structure $(S,R,J)$, with $E$ as in \eqref{eq:E_J}, we associate the complex Dirac structure \begin{equation}\label{eq:transverse-CR}
        L_{(S,R,J)}:=L(E,0)=E\oplus \Ann E,
    \end{equation}
    with real index $r=\cork S + \rk R$. If $\dim M=2n+r$, $L_{(S,R,J)}$ has order $s=\cork S$ and type $n$.

For maximal type complex Dirac structures, we prove the following:

\begin{proposition}\label{ext_type_cr}
Let $L$ be a complex Dirac structure with constant real index $r$ and order $s$ over a $(2n+r)$-dimensional manifold. If $L$ has constant type $n$, then there exist a transverse CR structure $(D, \Delta, J)$ and $B\in \Omega^2(M)$ such that $L=e^{B}L_{(D, \Delta,J)}$.
\end{proposition}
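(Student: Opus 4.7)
The plan is to extract from $L$ a transverse CR structure $(D,\Delta,J)$ and then to find a smooth real two-form $B\in\Omega^{2}(M)$ with $B_{\C}|_{E\times E}=\varepsilon$, where $L=L(E,\varepsilon)$. Such a $B$ yields
$$e^{-B}L=L(E,\varepsilon-B_{\C}|_{E\times E})=L(E,0)=E\oplus\Ann E=L_{(D,\Delta,J)},$$
which is the desired conclusion. Throughout, constancy of the invariants guarantees that $E$, $D$ and $\Delta$ are regular subbundles.

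For the transverse CR structure, equation \eqref{eq:dim_assoc_dist} together with $k=n$ yields $\rk\Delta=r-s=\rk\Delta_{0}$, so $\Delta=\Delta_{0}$, which is integrable by Lemma \ref{K_lie_algebroid}. The map $J$ on $D/\Delta$ is defined as in Proposition \ref{genmap}, but applied to $E$ rather than $L$: the decomposition $(D/\Delta)_{\C}=(E/\Delta_{\C})\oplus(\overline{E}/\Delta_{\C})$ is conjugation-invariant and determines $J\colon D/\Delta\to D/\Delta$ with $J^{2}=-\Id$ and $+i$-eigenbundle $E/\Delta_{\C}$, so that $E$ is the preimage of $\ker(J-i\Id)$ under the projection $T_{\C}M\to(TM/\Delta)_{\C}$. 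Involutivity of $E$ follows from involutivity of $L$ together with the fact that the anchor $\pr_{T_{\C}M}\colon\TCM\to T_{\C}M$ intertwines the Dorfman and Lie brackets. Hence $(D,\Delta,J)$ satisfies Definition \ref{def:transverse-CR}.

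For the construction of $B$, the only pointwise obstruction to writing $\varepsilon=B_{\C}|_{E\times E}$ with $B$ real is that $\varepsilon|_{\Delta_{\C}}$ be real-valued on $\Delta$, i.e., $\omega_{\Delta}=\im\varepsilon|_{\Delta}=0$, which holds by $\Delta=\Delta_{0}$. Explicitly, a Riemannian metric gives a smooth splitting $TM=\Delta\oplus C\oplus N$ with $D=\Delta\oplus C$, so $E=\Delta_{\C}\oplus C^{1,0}$, where $C^{1,0}\subset C_{\C}$ is the $+i$-eigenbundle of $J$ transferred to $C\cong D/\Delta$. Decompose $\varepsilon$ accordingly: its $\Delta_{\C}$-part is the complexification of a real form on $\Delta$ (by $\omega_{\Delta}=0$); its $(2,0)$-component on $C^{1,0}$ is the $(2,0)$-part of a real two-form on $C$ (standard for almost complex bundles); its cross term corresponds to a real element via the natural real isomorphism $\Delta^{*}\otimes_{\R}C^{*}\cong\Delta_{\C}^{*}\otimes_{\C}(C^{1,0})^{*}$. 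Assembling these on $D$ and extending by zero along $N$ produces a global smooth $B\in\Omega^{2}(M)$ with $B_{\C}|_{E}=\varepsilon$.

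The main obstacle is the smooth realization of $B$, which reduces to the pointwise linear-algebra fact that $\omega_{\Delta}=0$ is sufficient (and not only necessary) for the extension; the splittings coming from a Riemannian metric make this sufficient condition work smoothly and globally.
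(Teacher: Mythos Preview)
Your proof is correct and follows the same strategy as the paper: deduce $\Delta=\Delta_0$ (hence $\omega_\Delta=0$), use this to realize $\varepsilon$ as the restriction to $E$ of a real two-form $B$, and conclude $e^{-B}L=L(E,0)=L_{(D,\Delta,J)}$. The only difference is in how $B$ is built: rather than choosing a metric splitting $TM=\Delta\oplus C\oplus N$ and assembling $B$ piece by piece, the paper extends $\varepsilon$ in one step to a real form on $D_\C=E+\overline{E}$ by declaring it to equal $\overline{\varepsilon(\overline{X})}$ on $\overline{E}$ (well-defined on $E\cap\overline{E}=\Delta_\C$ precisely because $\omega_\Delta=0$), then extends arbitrarily to $TM$; this avoids the auxiliary splitting. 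A minor point: the construction of $J$ on $D/\Delta$ you invoke is the one from Section~\ref{sec:order-type}, not Proposition~\ref{genmap}.
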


\begin{proof}
Let $L=L(E,\varepsilon)$ with $\varepsilon\in \Gamma(\wedge^2 E^*)$. Define a real form in $\Gamma(\wedge^2 (E+\overline{E})^*)$ by the skew-symmetric extension of 
\[\begin{cases}
\varepsilon(X) & \textrm{ for } X \in E,\\
\overline{\varepsilon(\overline{X})} & \textrm{ for } X \in \overline{E}.\\
\end{cases}\]
Note that it is well defined as type $n$ implies that $\Delta_0=\Delta$ and hence $\varepsilon(X)=\overline{\varepsilon(\overline{X})}$ for $X\in E\cap \overline{E}$. Extend it to $B\in \Omega^2(M)\subset \Omega^2_\C(M)$. We then have $L=L(E,\varepsilon)=e^BL(E,0)$.
We recall that the real data $(D,\Delta, J)$ recovers $E$, so 
$L(E,0)=L_{(D,\Delta,J)}$.
Finally,  $(D,\Delta,J)$  is a transverse CR-structure, since $\Delta$ is integrable because  $\Delta=\Delta_0=\pr_{TM} K$  and $K$ is a Lie algebroid (Lemma \ref{K_lie_algebroid}). 
\end{proof}

These results generalize the appearance in generalized complex structures of symplectic (type $0$) and complex (type  $\dim M/2$) structures.

\subsection{Pointwise classification}

Proposition \ref{prop:linear-normal-form} gives a pointwise classification of lagrangian subbundles, up to a linear $B$-transformation.

 Propositions \ref{prop:type0} and \ref{ext_type_cr} make the left and right columns of Table \ref{table:examples} work locally (with constant invariants) up to  $B$-transformation and real transformation, respectively. Corollary \ref{regular_splitting} below will make the central column work locally (up to a transformation closed in the presymplectic directions).

\begin{table}[H]\setlength{\tabcolsep}{3pt}
    \centering
\begin{footnotesize}
\noindent\begin{tabular}{|c|c|c|c|} 
\hline
\multirow{2}{*}{order $r$} &  symplectic $(\Delta,\omega_\Delta)$ & \multirow{2}{*}{$\cdots$} & CR structure $(D,J)$,  \\
&  $\cork \Delta=r$ & &   $\cork D=r$\\ \hline
\multirow{5}{*}{order $s$}  &  &  $(\Delta,\omega_\Delta)\times (D/\Delta,J)$  &  \\
    & presymplectic $(\Delta,\omega_\Delta)$    &   presymplectic $\times$ CR  & transverse CR $(D,\Delta,J)$ \\
    & $\cork \Delta=s$ & $\cork D=s$ & $\cork D=s$\\
    &  $\rk \ker \omega_\Delta=r-s$ & $\cork_D \Delta=2k$ & $\rk \Delta=r-s$\\
    & & $\rk \ker \omega_\Delta=r-s$ &\\ \hline
\multirow{2}{*}{order $0$} &  presymplectic $(TM,\omega)$ & \multirow{2}{*}{$\cdots$} & transverse holomorphic  \\
&   $\rk \ker \omega=r$ & &   $(TM,\Delta,J)$, $\rk \Delta=r$\\ \hline
&  type $0$ & type $k$ &  type $n$\\ \hline
\end{tabular}
\end{footnotesize}

\caption{Classification of complex Dirac structures}
\label{table:examples}
\end{table}

\section{The associated real Dirac structure}\label{sec:assoc-Dirac}

For a complex Dirac structure $L=L(E,\varepsilon)$, the distribution $\Delta$ is pointwise endowed with the presymplectic structure $\omega_\Delta$. When the order is constant, we have proved in Proposition \ref{prop:Delta-smooth} that $\Delta$ defines a smooth distribution. When $L$ is a generalized complex structure, $(\Delta,\omega_{\Delta})$ corresponds to the symplectic foliation of the associated Poisson structure. So it is natural to ask whether the distribution
$$\widehat{L}=L(\Delta,\omega_\Delta)$$
is a Dirac structure. In this section we prove that this is the case.

\begin{theorem}\label{th:Dirac-constant-order}
For constant order, the distribution $\widehat{L}$ is a real Dirac structure.
\end{theorem}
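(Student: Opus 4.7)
My plan is to realize $\widehat L=L(\Delta,\omega_\Delta)$ as the image of a smooth, surjective $\R$-linear bundle map, and then transfer lagrangianity, smoothness, and involutivity from $L$. The main auxiliary object is the real subbundle $L\cap(TM\oplus T_\C^*M)\subset L$ (viewed as a real bundle) together with the map
$$\Psi\colon L\cap(TM\oplus T_\C^*M)\longrightarrow \TM,\qquad X+\tau\longmapsto X+\im\tau.$$
I would first verify that $L\cap(TM\oplus T_\C^*M)$ is a smooth vector bundle of constant rank: it is the kernel of the map $\Phi=\im\circ\pr_{T_\C M}\colon L\to TM$ from the proof of Proposition \ref{prop:Delta-smooth}, whose image is $D$. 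Since the order is constant, $D$ is a bundle of rank $2n+r-s$, and hence $\ker\Phi$ has constant real rank $2n+r+s$.

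Next I would check that $\Psi$ takes values in $\widehat L$ and surjects onto it. Containment is direct: if $X+\tau\in L$ with $X\in TM$, then $X\in E\cap TM=\Delta$ and $\tau|_E=\varepsilon(X)$, whence $(\im\tau)|_\Delta=\omega_\Delta(X)$. For surjectivity, given $X+\xi\in\widehat L$, pick any $\tau_0$ with $X+\tau_0\in L$; the sought $\tau$ must lie in $\tau_0+\Ann E$ with $\im(\tau-\tau_0)=\xi-\im\tau_0$, and the right-hand side belongs to $\Ann\Delta$ by the containment just checked. The key point is then the surjectivity of $\im\colon\Ann E\to\Ann\Delta$, which holds by a pointwise dimension count: its kernel equals $\Ann E\cap T^*M=\Ann D$ of real dimension $s$, while $\dim_\R\Ann E=2(k+s)$ at a point of type $k$, so the image has dimension $2k+s=\rk\Ann\Delta$. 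This is the step I expect to be the most delicate.

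From here everything flows. For smoothness, any $v\in\widehat L_m$ admits a $\Psi$-preimage, which extends to a local smooth section of $L\cap(TM\oplus T_\C^*M)$; its image under $\Psi$ is then a local smooth section of $\widehat L$ through $v$. Lagrangianity and rank $\dim M$ are built into the definition of $L(\Delta,\omega_\Delta)$. Finally, for involutivity, since $\Psi$ is a surjective smooth bundle map between smooth vector bundles of constant rank, it admits local smooth right inverses; thus any pair of local sections $X+\xi$, $Y+\eta$ of $\widehat L$ lifts to sections $X+\tau$, $Y+\sigma$ of $L\cap(TM\oplus T_\C^*M)$ with $\im\tau=\xi$, $\im\sigma=\eta$. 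Involutivity of $L$ then yields $[X+\tau,Y+\sigma]\in L$, and since $X$, $Y$ are real, $[X,Y]\in TM$, so the bracket remains in $L\cap(TM\oplus T_\C^*M)$. Applying $\Psi$ and using that $\im$ commutes with $\lie_X$, $d$, and $\iota_Y$ for real $X$, $Y$, one computes
$$\Psi([X+\tau,Y+\sigma])=[X,Y]+\im(\lie_X\sigma-\iota_Y d\tau)=[X,Y]+\lie_X\eta-\iota_Y d\xi=[X+\xi,Y+\eta],$$
which therefore lies in $\widehat L$, completing the proof.
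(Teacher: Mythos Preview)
Your proof is correct and follows essentially the same approach as the paper. Your map $\Psi(X+\tau)=X+\im\tau$ is exactly the composition $\pr_{\TM}\circ\begin{psmallmatrix}1&0\\0&-i\end{psmallmatrix}$ that the paper uses in its identity \eqref{eq:widehatL}, and both arguments derive smoothness and involutivity of $\widehat L$ from that of the constant-rank bundle $L\cap(TM\oplus T_\C^*M)$. The only notable differences are in execution: for surjectivity you use a dimension count on $\im\colon\Ann E\to\Ann\Delta$, whereas the paper builds an explicit preimage $X+\al'+i\al$ by extending $\varepsilon(X)-i\alpha_{|E}$ conjugate-symmetrically to $(E+\oE)^*$; and for involutivity you lift sections via a local right inverse and compute the bracket directly, while the paper asserts more tersely that the linear map and projection preserve involutivity (which works here because after the matrix the imaginary parts lie in $T^*M$, so their Dorfman bracket vanishes and $\re[u,v]=[\re u,\re v]$). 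Your version is more explicit on this last point.
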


\begin{proof}
By definition, $\widehat{L}$ is pointwise lagrangian. We have to check that it is smooth and involutive. We first prove the following identity:
\begin{equation}\label{eq:widehatL}
 \widehat{L} = \pr_{\TM} \left( \begin{pmatrix}
1 & 0\\
0 & -i 
\end{pmatrix} ( L \cap (TM \oplus T_\C^*M)) \right),
\end{equation}
where the $2\times 2$ matrix denotes, by blocks, a linear map of $\TCM$. 

Take $X+\al\in \widehat{L}$. Denote also by $\alpha$ its $\C$-linear extension to $T_\C^*M$, and consider 
	$$ \varepsilon(X) - i \alpha_{|E} \in E^*.  $$
	We extend this element to $\gamma\in (E+\oE)^*$ by $\gamma(\overline{Z})=\overline{\gamma(Z)}$. This is well defined as it is real for $\Delta$, since
	$$\varepsilon(X)_{|\Delta} - i \alpha_{|\Delta} = \re \varepsilon(X)_{|\Delta}.$$
	The element $\gamma$ is the $\C$-linear extension of an element of $D^*$, which we can extend to $\al'\in T^*M$. We check that $X+\al'+i\al$ belongs to $L$:
	$$ (\al'+i\al)_{|E}=\varepsilon(X) - i\alpha_{|E}+i\alpha_{|E}=\varepsilon(X).$$
	Thus $\widehat{L}$ is contained in the right-hand side. The right-hand side is isotropic, as for $X+\al'+i\al$, $Y+\be'+i\be\in L$,
	$$ \la X+\al, Y+\be\ra = \im \la X+\al'+i\al, Y+\be'+i\be\ra = 0.$$	
	Since $\widehat{L}$ is lagrangian, they must be equal.
	
	To see the smoothness of $\widehat{L}$, note that in the proof of Proposition \ref{prop:Delta-smooth} we have shown that $L \cap (TM \oplus T_\C^*M)$ is smooth, and so its image by the linear map $\begin{psmallmatrix}
	1 & 0\\
	0 & -i 
	\end{psmallmatrix}$ and the projection $\pr_{\TM}$ is smooth. 
	
	Finally, note that the right-hand side of \eqref{eq:widehatL} is involutive, as the distribution $L \cap (TM \oplus T_\C^*M)$ is and the linear map and the projection preserve involutivity. Hence, $\widehat{L}$ is involutive and a Dirac structure.
\end{proof}
Constant order is a sufficient but not a necessary condition: let $L=L(S,\varepsilon)$ be a real Dirac structure, its complexification $L_{\mathbb{C}}$ is a complex Dirac structure. Note that $\widehat{L_{\mathbb{C}}}=L$ and $\order L_{\mathbb{C}}=\codim S$ as $D=S$. Hence, $L_{\mathbb{C}}$ has associated a real Dirac structure, although its order is not necessarily constant ($\codim S$ may change on real Dirac structures).

Since $\widehat{L}$ is a Dirac structure, its image by the anchor map is an integrable distribution, so we obtain a refinement of Proposition \ref{prop:Delta-smooth}:
\begin{corollary}
If the order is constant, then $\Delta$ is integrable.
\end{corollary}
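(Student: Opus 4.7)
The plan is to read off the corollary directly from Theorem \ref{th:Dirac-constant-order} together with the general fact, recalled in Section \ref{sec:Dirac}, that the range distribution $\pr_{TM} L'$ of any (real) Dirac structure $L'$ is integrable and carries a presymplectic foliation. Under the constant order hypothesis, Theorem \ref{th:Dirac-constant-order} tells us that $\widehat{L} = L(\Delta, \omega_\Delta)$ is a genuine real Dirac structure in $\TM$, so it suffices to identify its range distribution with $\Delta$ and invoke the classical statement.

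First, I would note that, by construction, $\widehat{L} = L(\Delta,\omega_\Delta) = \{X + \xi \in \TM \mid X \in \Delta,\; \xi_{|\Delta} = \iota_X \omega_\Delta\}$, so its projection to $TM$ is exactly $\Delta$. Then, applying Theorem \ref{th:Dirac-constant-order}, which gives that $\widehat{L}$ is a Dirac structure whenever the order is constant, I would invoke the classical result recalled in Section \ref{sec:Dirac} (attributed to \cite{courant1990dirac}): the range distribution $\pr_{TM}\widehat{L}$ is integrable, and $\omega_\Delta$ restricts to each leaf as a presymplectic form. Substituting $\pr_{TM}\widehat{L}=\Delta$ yields integrability of $\Delta$ at once.

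There is essentially no obstacle here beyond the verification done in Theorem \ref{th:Dirac-constant-order}; the corollary is a direct consequence. If desired, one could further remark that the leaves of $\Delta$ inherit the presymplectic form $\omega_\Delta$, thus generalizing the presymplectic foliation of a real Dirac structure (and, when $L$ is generalized complex, recovering the symplectic foliation of the associated Poisson structure, in which case the order vanishes and the hypothesis is automatic).
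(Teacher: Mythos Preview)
Your proposal is correct and follows exactly the paper's own reasoning: the paper simply notes that since $\widehat{L}$ is a Dirac structure, its image by the anchor map is integrable, which is $\Delta$. Your argument is the same, with the identification $\pr_{TM}\widehat{L}=\Delta$ made explicit.
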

We can thus associate a presymplectic foliation to any complex Dirac structure with constant order. We will see in Section 7 that this foliation plays a fundamental role in the splitting theorem for complex Dirac structures.
Note that constant real index does not play any role in the smoothness of $\Delta$ and hence $\widehat{L}$, as we shall see in Section \ref{sec:ex-order-change}.

\begin{example}
If we start with a real Dirac structure $L$, we have $\widehat{L_\C}=L.$
	For $L_{i\omega}$ as in Example \ref{ex:LJ-Liw} and $L_\omega$ as in Example \ref{ex:presymp-Poisson},
	$$ 	\widehat{L_{i\omega}} = L_\omega. $$
\end{example}

\begin{example}
	Starting with a real Dirac structure $L(\Delta,\omega_\Delta)$, 
	the complex Dirac structure $L:=L(\Delta_\C,i\omega_\Delta)$ (see Example \ref{ex:complexification-Dirac}) satisfies 
	$$\widehat{L}=L(\Delta,\omega_\Delta).$$
	For a CR-structure $(D,J)$ or, more generally, for a transverse CR-structure $(S,R,J)$ as in  \eqref{eq:transverse-CR}, we have
	\begin{align*}
	    \widehat{L_{(D,J)}} &= T^*M,& \widehat{L_{(S,R,J)}} &= R\oplus \Ann R.
	\end{align*}
\end{example}
\begin{remark}
From Theorem \ref{th:Dirac-constant-order}, constant order implies that $\Delta$ is an involutive distribution. As for $D$, non-Levi flat CR structures (e.g. $S^3$ with the CR structure inherited by $\C^2$) provide examples of complex Dirac structures with constant invariants but with non-involutive $D$.
\end{remark}
The associated Dirac structure $\widehat{L}$ is preserved by backward image.
\begin{lemma}\label{lem:widehat-backward}
Let $\psi:N\to M$ be a smooth map, and $L$ be a complex Dirac structure over $M$. Then, as distributions, $\psi^! \widehat{L}=\widehat{\psi^! L}$.
\end{lemma}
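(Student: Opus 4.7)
The strategy is a pointwise set-theoretic verification, using the explicit characterization of $\widehat{L}$ established in the proof of Theorem~\ref{th:Dirac-constant-order}. Indeed, from \eqref{eq:widehatL}, a real element $X+\alpha\in\TM$ lies in $\widehat{L}$ if and only if there exists some $\alpha'\in T^*M$ with $X+\alpha'+i\alpha\in L$. Armed with this, I would unfold both sides of the claimed equality $\psi^!\widehat{L}=\widehat{\psi^!L}$ at a single point and check they produce the same condition on $Y+\beta\in\TN$.

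First, I would unfold $\widehat{\psi^!L}$. A real element $Y+\beta\in\TN$ belongs to $\widehat{\psi^!L}$ iff there exists $\beta'\in T^*N$ with $Y+\beta'+i\beta\in\psi^!L$. By definition of the backward image, this means there exists $\tau\in T_\C^*M$ such that $\psi_*Y+\tau\in L$ and $\psi^*\tau=\beta'+i\beta$. Writing $\tau=\gamma+i\delta$ with $\gamma,\delta\in T^*M$, the condition separates into $\psi^*\gamma=\beta'$ and $\psi^*\delta=\beta$, together with $\psi_*Y+\gamma+i\delta\in L$.

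Next, I would unfold $\psi^!\widehat{L}$. A real element $Y+\beta\in\TN$ belongs to it iff there exists $\beta_0\in T^*M$ with $\psi^*\beta_0=\beta$ and $\psi_*Y+\beta_0\in\widehat{L}$; invoking \eqref{eq:widehatL} once more, the latter is equivalent to the existence of $\alpha'\in T^*M$ with $\psi_*Y+\alpha'+i\beta_0\in L$. Under the relabeling $\gamma\leftrightarrow\alpha'$ and $\delta\leftrightarrow\beta_0$, this matches exactly the condition obtained above, with $\beta'=\psi^*\gamma$ being unconstrained and thus freely determined. Hence both sides consist of the same elements of $\TN$.

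The main (and really only) subtlety is making sure that no smoothness or constant-invariant hypothesis on $\psi^!L$ sneaks in. Since the statement is ``as distributions'', no such hypothesis is needed: the pointwise identity of sets above is exactly what the lemma asserts. The characterization \eqref{eq:widehatL} also holds pointwise regardless of the constant-order assumption in Theorem~\ref{th:Dirac-constant-order}, which is only used there to promote smoothness; in our setting it is simply a linear-algebraic description of $\widehat{L}_m$.
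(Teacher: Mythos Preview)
Your proof is correct and follows essentially the same approach as the paper: both rely on the pointwise characterization \eqref{eq:widehatL} (that $X+\alpha\in\widehat{L}$ iff $X+\alpha'+i\alpha\in L$ for some real $\alpha'$) and then unfold the definition of backward image on each side. The paper compresses your element-chasing into a two-line computation with sets, but the content is identical; your remark that \eqref{eq:widehatL} is a pointwise linear-algebraic identity, independent of the constant-order hypothesis, is a useful clarification that the paper leaves implicit.
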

\begin{proof}
By equation \eqref{eq:widehatL},
\begin{align*}
 \widehat{\psi^! L} &= \pr_{\TN} \left( \begin{pmatrix}
1&0\\
0&-i
\end{pmatrix}\cdot \{X+\psi^* \xi_1+i\psi^* \xi_2\st \psi_* X+\xi_1+i\xi_2\in L\} \right)\\
 & =\{X+\psi^* \xi_2\st \psi_* X+\xi_2\in \widehat{L}\} =\psi^! \widehat{L}.
\end{align*}
\end{proof}
Let $N\xhookrightarrow{\iota} M$ be a submanifold. Just as in the real case (Section \ref{sec:Dirac}), for the backward image $\iota^! L$ to be a complex Dirac structure, it is analogously proved that a transversality condition is sufficient:
\begin{definition}\label{def:cplx-trans}
A submanifold $N\xhookrightarrow{\iota} M$ is transversal to a complex lagrangian subbundle $L\subset \TCM$ if
\begin{equation*}
T_\C N+\pr_{T_\C M}L_{|N}=T_\C M_{|N}. 
\end{equation*}
\end{definition}
\begin{lemma}\label{lem:backward-iota}
If $L$ is a complex Dirac structure over $M$ and $N\xhookrightarrow{\iota} M$ is a transversal to $L$, then $\iota^{!}L$ is a complex Dirac structure over $N$.
\end{lemma}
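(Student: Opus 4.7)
The plan is to adapt the standard real-Dirac argument (sketched in Section~\ref{sec:Dirac}) to the complexified setting, splitting the task into three pieces: isotropy, correct rank, and involutivity.

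First I would verify isotropy of $\iota^!L$ directly, since for $X+\iota^*\xi,\,Y+\iota^*\eta\in \iota^!L$ with $\iota_*X+\xi,\,\iota_*Y+\eta\in L$ one has
$$\la X+\iota^*\xi,\, Y+\iota^*\eta\ra_{\TCN}=\tfrac{1}{2}\bigl(\eta(\iota_*X)+\xi(\iota_*Y)\bigr)=\la \iota_*X+\xi,\, \iota_*Y+\eta\ra_{\TCM}=0.$$
For the rank, I would use the surjection $\Phi\colon L|_N\cap (T_\C N\oplus T_\C^*M|_N)\twoheadrightarrow \iota^!L$, $X+\xi\mapsto X+\iota^*\xi$, whose kernel is $L\cap \Ann T_\C N|_N$. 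The transversality condition $T_\C N+\pr_{T_\C M}L|_N=T_\C M|_N$ then yields, via a straightforward dimension count, that $\iota^!L$ has constant complex rank equal to $\dim N$, so it is a smooth lagrangian subbundle of $\TCN$.

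For involutivity, given sections $s_j=X_j+\iota^*\xi_j\in\Gamma(\iota^!L)$, $j=1,2$, the plan is: (i) extend each $s_j$ locally to a section $\tilde s_j=\tilde X_j+\tilde\xi_j\in \Gamma(L)$ on a neighborhood of $N$ in $M$ with $\tilde X_j$ tangent to $N$ along $N$, $\tilde X_j|_N=\iota_*X_j$ and $\iota^*(\tilde\xi_j|_N)=\iota^*\xi_j$; (ii) apply involutivity of $L$ to obtain $[\tilde s_1,\tilde s_2]\in\Gamma(L)$; and (iii) check that pulling $[\tilde s_1,\tilde s_2]|_N$ back along $\iota$ produces an element of $\iota^!L$ that coincides with the Dorfman bracket $[s_1,s_2]$ on $\TCN$. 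In step (iii), the Lie bracket $[\tilde X_1,\tilde X_2]$ automatically remains tangent to $N$, while $\mathcal{L}_{\tilde X_1}\tilde\xi_2$ and $\iota_{\tilde X_2}d\tilde\xi_1$ pull back along $\iota$ to the corresponding terms on $\TCN$.

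The main obstacle is step (i). The definition of $\iota^!L$ provides, pointwise on $N$, a covector $\xi_0$ with $\iota^*\xi_0=\iota^*\xi$ and $\iota_*X+\xi_0\in L$, but step (ii) needs a smooth local section of $L$ extending this pointwise data. The transversality condition is exactly what guarantees that the pointwise choice can be upgraded to a smooth extension: it ensures that $L|_N\cap (T_\C N\oplus T_\C^*M|_N)$ is a smooth subbundle through which such lifts factor, and one then extends off of $N$ using a local frame of $L$ together with an $N$-tangent extension of $X$ provided by a tubular neighborhood. Once (i) is established, the rest is a routine naturality check for the Dorfman bracket.
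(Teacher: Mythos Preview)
Your proposal is correct and is precisely the argument the paper has in mind: the paper does not spell out a proof of this lemma but simply remarks that ``it is analogously proved'' to the real case (referring back to Section~\ref{sec:Dirac} and the reference there), and your plan carries out exactly that analogy in the complexified setting. One small simplification worth noting: under the transversality hypothesis your kernel $L\cap\Ann T_\C N|_N$ is actually zero (since $L\cap T_\C^*M=\Ann E$ and $\Ann E\cap\Ann T_\C N=\Ann(E+T_\C N)=0$), so $\Phi$ is an isomorphism and the dimension count is immediate.
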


By Lemmas \ref{lem:widehat-backward} and \ref{lem:backward-iota},  if $L$ is a generalized complex structure with associated Poisson bivector $\pi$ and $N\xhookrightarrow{\iota} M$, then $\widehat{\iota^! L}=\iota^! L_{\pi}$. Moreover:

\begin{lemma}\label{sing_cxDirac}
Let $L$ be a complex Dirac structure with constant real index and order, $N\xhookrightarrow{\iota}M$ a submanifold, and $m\in N$. If $T_{m}N\oplus\Delta_{m}=T_{m}M$, then, around $m$, $\iota^{!}L$ is a complex Dirac structure and $\iota^{!}\widehat{L}$ is a Dirac structure given by the graph of a Poisson bivector vanishing at~$m$.
\end{lemma}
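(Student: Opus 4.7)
My plan is to assemble the claim from three ingredients already available: the complex transversality of Definition \ref{def:cplx-trans}, the interaction of $\widehat{\phantom{L}}$ with backward images (Lemma \ref{lem:widehat-backward}), and the Dufour–Wade-type singularity criterion (Proposition \ref{sing_Dirac}).

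First I would upgrade the real transversality $T_mN\oplus \Delta_m=T_mM$ to the complex transversality that Lemma \ref{lem:backward-iota} requires. The key observation is that $E\cap\overline{E}$ is conjugation-invariant and thus equals $\Delta_\C$, so $\Delta_\C\subseteq E=\pr_{T_\C M}L$. Consequently
\[
 T_\C N + \pr_{T_\C M}L \;\supseteq\; (T_mN)_\C + \Delta_{m,\C} = T_{m,\C}M
\]
at $m$, and by openness (rank is lower semicontinuous and the inclusion is an equality at $m$) this transversality persists on a neighbourhood. Applying Lemma \ref{lem:backward-iota} then gives that $\iota^{!}L$ is a complex Dirac structure near $m$. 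The same hypothesis $T_mN+\Delta_m=T_mM$, together with $\pr_{TM}\widehat{L}=\Delta$ (which is the definition of $\widehat L$), provides the real transversality needed for $\iota^{!}\widehat{L}$ to be a real Dirac structure on a neighbourhood of $m$ (recall $\widehat{L}$ is a real Dirac structure by Theorem \ref{th:Dirac-constant-order}).

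Next, by Lemma \ref{lem:widehat-backward} we have the identification $\widehat{\iota^{!}L}=\iota^{!}\widehat{L}$ as distributions on $N$, so it only remains to show that $\iota^{!}\widehat{L}$ is the graph of a Poisson bivector vanishing at $m$. For this I would compute the range distribution of $\iota^{!}\widehat{L}$ at $m$: by the very definition of the backward image,
\[
 \pr_{TN}(\iota^{!}\widehat{L})_m \;=\; T_mN\cap \pr_{TM}\widehat{L}_m \;=\; T_mN\cap \Delta_m \;=\; \{0\},
\]
where the last equality is the direct sum hypothesis. Thus the presymplectic leaf of $\iota^{!}\widehat{L}$ through $m$ is the single point $\{m\}$, so Proposition \ref{sing_Dirac} yields a neighbourhood of $m$ on which $\iota^{!}\widehat{L}$ is the graph of a Poisson bivector $\pi$. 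Finally, since the image of $\pi^\sharp_m$ is contained in the range distribution, which is zero at $m$, we conclude $\pi_m=0$.

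I do not anticipate a serious obstacle; the only subtle step is the passage from the real transversality hypothesis to complex transversality, which rests on the small but essential remark that $E\cap\overline{E}$ is the complexification of $\Delta$. Everything else is a bookkeeping exercise in combining Lemmas \ref{lem:widehat-backward}--\ref{lem:backward-iota} with Proposition \ref{sing_Dirac}.
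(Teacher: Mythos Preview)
Your proposal is correct and follows essentially the same approach as the paper: both argue that $\Delta_\C\subseteq \pr_{T_\C M}L$ upgrades the real transversality to complex transversality in a neighbourhood, invoke Lemmas \ref{lem:widehat-backward}--\ref{lem:backward-iota}, and then apply Proposition \ref{sing_Dirac} after noting that the range of $\iota^{!}\widehat{L}$ at $m$ is $T_mN\cap\Delta_m=\{0\}$. The only cosmetic difference is that the paper cites the local foliation property from \cite{dufour2008local} to obtain the neighbourhood, whereas you use the lower semicontinuity of the rank of the smooth distribution $T_\C N + E$; both are valid.
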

\begin{proof}
By the local foliation property \cite[Section 1.5]{dufour2008local}, there exists a neighbourhood $U$ of $m$ where $N$ is transversal to $\widehat{L}$, and hence to $L$ (as $\Delta_\C \subseteq \pr_{\TCM} L_{|N}$). Consequently, $\iota^{!}L_{|U}$ is a complex Dirac structure and $\widehat{\iota^{!}L}_{|U}$ is a Dirac structure. Since $\widehat{\iota^{!}L}_{|U}=\iota
^{!}\widehat{L}_{|U}$ by Lemma \ref{lem:widehat-backward}, the presymplectic leaves of $\widehat{\iota^{!}L}_{|U}$ are the intersection of $N\cap U$ with the presymplectic leaves of $\widehat{L}$. Consequently, the leaf of $\iota
^{!}\widehat{L}_{|U}$ passing through $m$ is a single point (we shrink $U$ if needed) and, by Proposition \ref{sing_Dirac}, the last part follows.
\end{proof}

Finally, we give a definition motivated by Lemma \ref{sing_cxDirac} and Table \ref{table:examples}.
\begin{definition}\label{cr_type_def}
Let $L$ be a complex Dirac structure. We say that a point $m\in M$ is of CR type if, at $m$, the real index and the order are equal and the type is maximal. 
\end{definition}

By \eqref{eq:dim_assoc_dist}, a point $m$ is of CR type if and only if $\Delta_m=\{0\}$. Assume moreover that $\widehat{L}$ is a Dirac structure. By Proposition \ref{sing_Dirac}, $\widehat{L}$ is, around $m$, the graph of a Poisson bivector vanishing at $m$. Consequently, $\Delta_0=\widehat{L}\cap TM=\{0\}$ and so the real index and the order are equal around $m$. In terms of Table \ref{table:examples}, the complex Dirac structure lies in the top right corner at $m$, whereas around $m$ it lies in the top row (of a possibly different table with higher real index).

\begin{remark}
On generalized complex structures, the points of CR type are precisely the points of complex type.
\end{remark}

\section{More examples}

\subsection{Order and type change with constant real index}\label{sec:ex-order-change}

Consider $M=\R^{3}$ with coordinates $(x,y,z)$ and 
the lagrangian subbundle  $$L=L(E,i\iota^{*}_{E}\omega),$$
with $\omega=dx\wedge dy\in\Omega^{2}(\R^{3})$ and $E$ the involutive regular distribution $$E=\la\partial_{x}, (e^{y}\partial_{y}+if(y)\partial_{z})\ra_{\C},$$
where $f\in C^{\infty}(M;\R)$ only depends on the variable $y$ and has non-empty zero set $Z=\{(x,y,z)\:|\:f(y)=0\}$.

Since $E$ is regular, $L$ is smooth.  Moreover, $d_E \iota^*_{E}\omega=\iota^*_{E} d\omega=0$ and by Proposition \ref{prop:E-regular}, $L$ is a complex Dirac structure.

We have $\Delta_{|Z}=\la \partial_{x},\partial_{y}\ra_\R$ and $\Delta_{|M\setminus Z}=\la \partial_{x}\ra_\R$, whereas for $\omega_{\Delta}=\iota_{\Delta}^{*}\omega$,
$$
\omega_{\Delta}|_{p}=\begin{cases}
dx\wedge dy,& \text{for}\: p\in Z,\\
0,&\text{for}\: p\not\in Z.\\
\end{cases}
$$
So $\omega_{\Delta}$ is nondegenerate on $Z$ and has one-dimensional kernel on $M\setminus Z$.

We look at the associated distributions and invariants of $L$. 
\medskip
\setlength{\tabcolsep}{20pt}
\renewcommand{\arraystretch}{1.5}
$$\begin{array}{|c|c|c|c|c|c|} \hline
            &  \Delta &   D    &  \order L  & \type L & \ri L\\ \hline
p\in Z & \la \partial_{x},\partial_{y}\ra_\R & \Delta_{p} & 1 & 0 & 1  \\ \hline
p\not\in Z & \la \partial_{x}\ra_\R & T_{p}M & 0 & 1 & 1 \\    \hline
\end{array}$$
\medskip

The real index follows from $\dim \ker \omega_\Delta$, the order and Proposition \ref{prop:linear-normal-form}.
Thus, $L$ has constant real index one, and type and order changing along $Z$. Note that the distribution $\Delta$ is not smooth and, hence, neither is the associated lagrangian distribution $\widehat{L}$.

 \begin{remark}
 A straightforward computation shows that
  $$K=\R\cdot (f(y)\partial_{x}+e^{y}dz),$$
  which also explains the variation of the invariants along $M$.
 \end{remark}

 \subsection{A foliation with generalized complex leaves}\label{gen_fol}

 Let $D\subseteq TM$ be a regular involutive distribution. Consider a complex Dirac structure $L\subset \TCM$ such that $K=\Ann D$. We then have $K^\perp=D\oplus T^*M$, so $D$ is precisely the distribution defined in Section \ref{sec:order-type}, and the real index and the order of $L$ are constant and equal. Moreover,  
 $$ \frac{K^\perp}{K} = \frac{D\oplus T^{*}M}{\Ann D} \cong D\oplus D^{*},$$
 so $\cJ$ in Proposition \ref{genmap} defines a generalized almost complex structure on each leaf $S\subseteq M$, as $TS=D_{|S}$. We prove next that it is integrable.
 
The Dorfman bracket descends to $D\oplus D^*$, obtaining the bracket
$$[X+\alpha, Y+\beta]_{D}=[X,Y]+\lie^{D}_{X}\beta-\imath_{Y}d_{D}\alpha,$$
for $X+\alpha, Y+\beta\in \Gamma(D\oplus D^{*})$, where $d_{D}$ is the differential along $D$ and 
$$\lie^{D}_{X}=\imath_{X}d_{D}+d_{D}\imath_{X}.$$
From the axioms of the Courant algebroid $\TM$, it follows that:
\begin{lemma}
The bundle $D\oplus D^{*}$ is a Courant algebroid with the natural pairing, bracket and anchor.
\end{lemma}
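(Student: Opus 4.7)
The plan is to realize $D\oplus D^*$ as a Courant reduction of $\TM$ by the involutive isotropic subbundle $\Ann D$. Specifically, one has $(\Ann D)^\perp = D\oplus T^*M$ under the pairing of $\TM$, and the natural map $D\oplus T^*M \to D\oplus D^*$ sending $X+\alpha\mapsto X+\alpha|_D$ descends to an isomorphism $(\Ann D)^\perp/\Ann D \cong D\oplus D^*$. The induced pairing is the natural one, and the anchor $\pr_D$ is induced from $\pr_{TM}$.

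First, I would check that $\Ann D$ is isotropic (immediate, as it lies inside $T^*M$) and involutive under the Dorfman bracket. The latter is trivial, since for $\alpha,\beta\in\Gamma(T^*M)$ we have $[\alpha,\beta]=0$. Next, using the involutivity of $D$, I would verify that $(\Ann D)^\perp$ is closed under the Dorfman bracket modulo $\Ann D$: for $X+\alpha\in\Gamma(D\oplus T^*M)$ and $\beta\in\Gamma(\Ann D)$,
$$[X+\alpha,\beta]=\lie_X\beta=\iota_X d\beta + d(\iota_X\beta)=\iota_X d\beta,$$
and $(\iota_X d\beta)(Y)=X\beta(Y)-Y\beta(X)-\beta([X,Y])=0$ for $Y\in\Gamma(D)$, where the final equality uses $\beta\in\Ann D$ and $[X,Y]\in\Gamma(D)$. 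Hence $[X+\alpha,\beta]\in\Gamma(\Ann D)$, and analogously in the opposite order.

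I would then identify the induced bracket on the quotient with $[\cdot,\cdot]_D$. For $X+\alpha,Y+\beta\in\Gamma(D\oplus T^*M)$,
$$[X+\alpha,Y+\beta]=[X,Y]+\lie_X\beta-\iota_Y d\alpha,$$
with $[X,Y]\in\Gamma(D)$ by involutivity. Restricting the covector part to $D$, the identities $(\lie_X\beta)|_D = \lie_X^D(\beta|_D)$ and $(\iota_Y d\alpha)|_D = \iota_Y d_D(\alpha|_D)$ follow by unwinding Cartan's formula and using $[X,Y]\in \Gamma(D)$ for $X,Y\in\Gamma(D)$. Thus the quotient bracket coincides with $[\cdot,\cdot]_D$.

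Finally, the Courant algebroid axioms for $(D\oplus D^*,\langle\cdot,\cdot\rangle,[\cdot,\cdot]_D,\pr_D)$ (the Jacobi-type identity, Leibniz rule, anchor compatibility, symmetric bracket formula, and ad-invariance of the pairing) are inherited from the corresponding axioms on $\TM$: each is linear/tensorial and descends to the quotient by a direct check on representatives, using that both $\Ann D$ and its ``complement'' $D$ in $(\Ann D)^\perp$ are preserved by the bracket in the sense established above. The main (minor) obstacle is simply ensuring the bracket descends, which the computation above settles; after that, the Courant algebroid axioms transfer for free from $\TM$.
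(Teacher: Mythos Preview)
Your proof is correct and essentially matches the paper's approach: the paper simply asserts that the lemma follows from the Courant algebroid axioms of $\TM$ and gives no further detail. Your reduction argument by the isotropic involutive subbundle $\Ann D\subset \TM$, identifying $D\oplus D^*$ with $(\Ann D)^\perp/\Ann D$, is exactly the clean way to make that assertion precise, and your verifications (involutivity of $\Ann D$, closure of $(\Ann D)^\perp$ under the bracket, descent of the bracket to $[\cdot,\cdot]_D$) are all correct.
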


 The $+i$-eigenbundle of $\cJ$ is 
 $$ L'=\{X+ \xi_{|D_\C} \st X+\xi \in L\}\subset (D\oplus D^*)_\C.$$
 Since $(\lie_{X}\eta-\imath_{Y}d\xi)_{|D_\C}=\lie^{D}_{X}(\eta_{|D_{\C}})-\imath_{Y}d_{D}(\xi_{|D_{\C}})$ and $L$ is involutive, the subbundle $L'$ is involutive. This involutivity descends to any leaf $S\subseteq TM$, as any element of $\Gamma( L'_{|S})$ can be extended to $\Gamma(L')$. We thus have a foliation by generalized complex leaves.

\begin{remark}\label{rmk_gen_CR} 
This example appeared in \cite{li2011av} with the name of a generalized CR structure.
\end{remark}

\section{Splitting theorem}
\label{sec:splitting}

Our last result is an extension of the splitting theorem for generalized complex structures \cite[Thm. 1.4]{abouzaid2006local} to complex Dirac structures. 

\begin{theorem}\label{splitting}
Let $L\subset \TCM$ be a complex Dirac structure with constant real index $r$ and order $s$. Let $m\in M$ be a point of type $k$, denote by $(S, \omega)$ the presymplectic leaf of $\widehat{L}$ passing through $m$, and let $N\subseteq M$ be a submanifold containing $m$ such that $T_{m}N\oplus T_{m}S=T_{m}M$. Then there exist neighbourhoods $V\subseteq N$, $W\subseteq S$ and $U\subseteq M$ of $m$, with $U\cong V\times W$, and a local closed real two-form $B$, such that 
$$L_{|U}\cong e^{B}(\iota^{!}L\times L_{i\omega}),$$
where $\iota:V\to M$ is the inclusion, $\iota^{!}L$ has constant real index $s$ and order $s$ and it is of CR type (Definition \ref{cr_type_def}) at $m$, and $L_{i\omega}$ is the complex Dirac structure associated to $(W,\omega_{|W})$.
\end{theorem}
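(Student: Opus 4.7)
My plan is a two-stage strategy: first split the associated real Dirac structure $\widehat{L}$ (well-defined by Theorem~\ref{th:Dirac-constant-order} since $L$ has constant order), then lift this splitting to the complex Dirac structure $L$ via a further gauge. The presymplectic foliation of $\widehat{L}$ comes from the integrable distribution $\Delta$, and its leaf through $m$ is exactly $(S,\omega)$. I first apply the splitting theorem for real Dirac structures, whose proof rests on the Euler-like section technique of~\cite{bursztyn2019splitting}, to $\widehat{L}$ at $m$ with transversal $N$. This produces neighbourhoods $V\subseteq N$, $W\subseteq S$, $U\subseteq M$ of $m$, a diffeomorphism $\phi\colon V\times W\xrightarrow{\sim} U$, and a closed real two-form $B_0$ on $U$ with
\[
\phi^{*}(e^{-B_0}\widehat{L}) = \iota^{!}\widehat{L}\times L_\omega.
\]

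Second, I transport $L$ by setting $\tilde L := \phi^{*}(e^{-B_0}L)$ on $V\times W$. Lemma~\ref{lem:widehat-backward} ensures backward images commute with $\widehat{\,\cdot\,}$, and a direct check shows real $B$-transformations preserve $\widehat{\,\cdot\,}$ (the shift $\varepsilon\mapsto\varepsilon+B|_{E}$ keeps $\omega_\Delta=\im\varepsilon|_\Delta$ unchanged); together these yield $\widehat{\tilde L}=\iota^{!}\widehat{L}\times L_\omega$. By Lemma~\ref{sing_cxDirac}, shrinking $V$ if needed, $\iota^{!}L$ is a complex Dirac structure of CR type at $m$ with constant real index $s$ and order $s$; by Lemma~\ref{ri_order_adtv}, the product $\iota^{!}L\times L_{i\omega}$ has the right invariants $(r,s)$ and the same associated real Dirac as $\tilde L$.

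Third, I look for a closed real two-form $B_1$ on $V\times W$ with $e^{B_1}\tilde L=\iota^{!}L\times L_{i\omega}$; the two-form $B$ promised by the theorem is then assembled from $B_0$ and the transport of $B_1$. Both complex Dirac structures share the same $\widehat{\,\cdot\,}$, so they coincide on $\Delta$, $D$, $\Delta_0$ and $\omega_\Delta$; they can differ only in the pointwise complex structure on $D/\Delta$ (equivalently, in the full distribution $E$) and in $\re\varepsilon$. The main obstacle is precisely this matching: showing that both discrepancies are trivial up to a closed real gauge. The key geometric input is the inclusion $T_\C W\subseteq E_{\tilde L}$ together with the involutivity of $\tilde L$, which constrains the $W$-dependence of $E_{\tilde L}/\Delta_\C$ and of $\re\varepsilon_{\tilde L}$: sections of $\tilde L$ projecting to $TW$ bracket-preserve $\tilde L$, forcing $E_{\tilde L}/\Delta_\C\subseteq T_\C V/(\Delta_{\iota^{!}L})_\C$ to be $W$-invariant and hence to match $E_{\iota^{!}L\times L_{i\omega}}/\Delta_\C$; the residual $W$-dependence of $\re\varepsilon_{\tilde L}$ is then exact in the $W$-directions and can be absorbed into $B_1$. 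A Moser-type interpolation along the affine segment of complex Dirac structures with common $\widehat{\,\cdot\,}$ will package this uniformly and deliver $B_1$.
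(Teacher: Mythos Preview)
Your two-stage strategy is genuinely different from the paper's approach, and the third stage contains a gap that is not easily repaired.

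The paper does \emph{not} first split $\widehat{L}$ and then correct. Instead, it constructs an Euler-like section directly in $\Gamma(L)$ (Lemma~\ref{splitting_step2}): using foliated coordinates for the null foliation $\Delta_0$ of $\widehat{L}$ together with a Weinstein chart on the leaf space, one lifts the Euler section $Y+\alpha\in\Gamma(\widehat{L})$ to $Y+\beta_1+i\alpha\in\Gamma(L)$ via the identity $\widehat{L}=\pr_{\TM}\big(\begin{psmallmatrix}1&0\\0&-i\end{psmallmatrix}(L\cap(TM\oplus T^*_\C M))\big)$, and then adds explicit corrections from $K$ to make the vector part the full Euler field and the section vanish along the transversal. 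The flow of this section preserves $L$ itself; taking the limit $t\to 0$ yields the product in one step, with the real two-form $B$ arising as an integral of pullbacks of $d\beta$, hence manifestly closed.

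Your step~4 asks for a closed real $B_1$ with $e^{B_1}\tilde L=\iota^!L\times L_{i\omega}$. Since real $B$-transformations never change $E=\pr_{T_\C M}L$, you must first prove $E_{\tilde L}=E_{\iota^!L}\times T_\C W$; your $W$-invariance argument does give this (the flows of lifts of $\partial_{w_j}$ preserve $\tilde L$ and act on $E$ only through translation). But even then the two structures are $L(E,\varepsilon_{\tilde L})$ and $L(E,\varepsilon_{\mathrm{prod}})$, and the hypothesis $\widehat{\tilde L}=\widehat{L_{\mathrm{prod}}}$ only forces the \emph{imaginary part of the difference to vanish on $\Delta$}, not on all of $E$. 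Concretely, the flows you use to propagate along $W$ carry \emph{complex} gauge factors $e^{\gamma_s}$ with $\gamma_s=-\int\varphi_u^*(d\beta_j+id\alpha_j)\,du$, so at a generic $W$-slice $(\varepsilon_{\tilde L}-\varepsilon_{\mathrm{prod}})|_{E}$ need not be the restriction of a real form, let alone a closed one. Your ``Moser-type interpolation'' does not address either obstruction. In effect, to close the gap you would have to run an Euler-like retraction along $W$ using a section of $\tilde L$---which is precisely the paper's argument, making the prior splitting of $\widehat{L}$ an unnecessary detour.
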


The pointwise proof of Proposition \ref{prop:linear-normal-form} does not extend locally, since $L/K_{\C}$ does not always define a generalized complex structure. Our proof of Theorem \ref{splitting} is inspired by \cite{bursztyn2019splitting} but requires a local understanding of complex Dirac structures. We start by finding a local isomorphism for $L$ where all its associated distributions are trivial.

Note that $\Delta_0:=\ker \omega_{\Delta}$ is a regular involutive distribution of rank $r-s$ whose associated foliation is the null foliation $\mathcal{F}$ of $\widehat{L}$. Take a foliated chart $U$ where $\mathcal{F}$ is a simple foliation. Since $T_m N \cap \Delta_0=0$, $N$ is transversal to $\mathcal{F}$ (shrinking $U$ if necessary) and we can take a neighbourhood of $m$ in $N$ as the leaf space $P$ with associated submersion $u:U\to P$. Note that $u(U\cap N)=U\cap N$ and from the chart we get a diffeomorphism $\psi_1:U\to P\times \R^{r-s}$.  If $S'$ is a leaf of $\Delta$, then $u(U\cap S')=P\cap S'$ (as $S'$ is foliated by leaves of $\Delta_0$) and so $\psi_1(S')=(P\cap S')\times \R^{r-s}$.  By \cite[Cor. 2.6.3]{courant1990dirac}, the leaf space $P$ inherits a Poisson bivector $\pi$ from the Dirac structure $\widehat{L}$ and $\widehat{L}_{|U}=u^!L_\pi$. 

Let $S$ be the presymplectic leaf of $\widehat{L}$ passing through $m$, note that $u(S)=P\cap S$ is the symplectic leaf of $\pi$ passing through $u(m)$. By the Weinstein Splitting theorem \cite[Thm. 1.4.5]{dufour2006poisson} around $u(m)$ with transversal $U\cap N$, there exists a chart $\psi_2$ on $P$ around $u(m)$ with chart
$$\psi_2=(x_1,\ldots, x_{2k+s}, q_1,\ldots, q_{n-k},p_1,\ldots, p_{n-k}),$$
with coordinates $(x_1,\ldots, x_{2k+s})$ for $U\cap N$ and $(q_1,\ldots, q_{n-k},p_1,\ldots, p_{n-k})$
 for $\R^{2(n-k)}$ (which is equivalent to the symplectic leaf), 
 and in those coordinates $\pi=\pi_{0}+\pi_{N}$ with $\pi_0=\sum_{i=1}^{n-k} \partial_{p_{i}}\wedge\partial_{q_{i}}$ and $\pi_N$ a Poisson bivector in a neighbourhood $V_0$ of $u(m)$ in $U\cap N$ vanishing at $u(m)$. Finally, we obtain a foliated chart for $\Delta_0$, $$\psi=\psi_1^{-1}\circ (\psi^{-1}_2,Id) : \mathcal{U}\to U,$$
 $$\psi^{-1}=(x_1,\ldots, x_{2k+s}, q_1,\ldots, q_{n-k},p_1,\ldots, p_{n-k}, y_1,\dots y_{r-s}),$$
 where $V$ is the image by  $(x_1,\ldots, x_{2k+s})$ of $V_0$ and $\mathcal{U}=V\times \R^{2(n-k)}\times \R^{r-s}$,
 satisfying:
 \begin{enumerate}[a), leftmargin=1.4em]
    \item $\psi$ sends $V\times \{0\}$ onto $V_0$, and $\{m\}\times \R^{2(n-k)}\times \R^{r-s}$ onto $U\cap S$.
     \item $\psi_2 \circ u\circ \psi^{-1}:V\times \R^{2(n-k)}\times \R^{r-s}\to V\times \R^{2(n-k)}$ is the projection.
     \item $\psi_{*}(\{0\}\times \{0\}\times T\R^{r-s})=\Delta_0$.
     \item $\psi^! \widehat{L}$ has a simple null foliation with leafwise Poisson bivector $\pi_0+\pi_N$.
 \end{enumerate}
 
 Recall that, given a vector bundle $E$ over $M$, the \emph{Euler vector field} is the vector field associated to the one-parameter subgroup $s\mapsto \kappa_{e^{-s}}$ in $ \Aut(E)\subseteq \Diff(E)$, where $\kappa_{t}:E\to E$, $\kappa_{t}e=te$ for every $t\in \R\setminus\{0\}$ and $e\in E$. In a fibred local coordinate system for $E$, $(x_{j},y_{j})$, with $x_{j}$ as the fibre directions and $y_{j}$ as the base directions, it is expressed~as
$$\sum_{j}x_{j}\frac{\partial}{\partial x_{j}}.$$

\begin{lemma}\label{splitting_step2}
With the notation above, there exists a section $\mathcal{E}=X+\beta+i\alpha\in \Gamma(\psi^{!}L_{|U})$ such that $\mathcal{E}_{|V\times \{0\}}=0$, $X$ is the Euler vector field of the trivial bundle $\mathcal{U}=V\times \R^{2(n-k)+r-s}$ over $V$,  $\beta\in \Omega^{1}(\mathcal{U})$, and
\begin{equation}\label{eq:alpha}
\alpha=\sum^{n-k}_{j=1}(q_{j}dp_{j}-p_{j}dq_{j})\in \Omega^1 (\mathcal{U}).    
\end{equation}
\end{lemma}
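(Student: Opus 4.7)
The plan is to construct $\varepsilon$ in two steps: first I exhibit a section $X+\alpha$ of the associated real Dirac structure $\psi^!\widehat{L}$, then lift it to a complex section of $\psi^!L$ by invoking identity \eqref{eq:widehatL} from the proof of Theorem~\ref{th:Dirac-constant-order}.

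For the first step I use property (d) of the chart, which gives $\psi^!\widehat{L}=u^!L_\pi$, where $u\colon\mathcal{U}\to V\times\R^{2(n-k)}$ is the projection that kills the $y$-variables and $\pi=\pi_0+\pi_N$. Let $X_0:=\sum_j(q_j\partial_{q_j}+p_j\partial_{p_j})$ and $\xi:=\sum_j(q_j\,dp_j-p_j\,dq_j)$, viewed on $V\times\R^{2(n-k)}$. A direct calculation yields $\pi_0(\xi)=X_0$, and since $\xi$ has no $V$-component while $\pi_N$ is a bivector on $V$, one gets $\pi_N(\xi)=0$; hence $\pi(\xi)=X_0$, so $X_0+\xi\in L_\pi$. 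As $u_*X=X_0$ and $\alpha=u^*\xi$, this lifts to $X+\alpha\in u^!L_\pi=\psi^!\widehat{L}$.

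For the second step, applying \eqref{eq:widehatL} to $\psi^!L$ gives that $X+\alpha\in\psi^!\widehat{L}$ if and only if there is a real $1$-form $\beta$ with $X+\beta+i\alpha\in\psi^!L$. Pointwise, the admissible $\beta$ form a non-empty affine subspace of $T^*\mathcal{U}$ modelled on $\psi^!L\cap T^*\mathcal{U}$. Since $\psi^!L\cap(T\mathcal{U}\oplus T^*_\C\mathcal{U})$ is a smooth subbundle (as in the proof of Theorem~\ref{th:Dirac-constant-order}), these affine spaces assemble into a smooth affine subbundle of $T^*\mathcal{U}$, which carries a smooth section $\beta$ by a partition-of-unity argument.

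Finally, $X$ and $\alpha$ vanish identically on $V\times\{0\}$, so over this submanifold the condition $X+\beta+i\alpha\in\psi^!L$ reduces to $\beta\in\psi^!L\cap T^*\mathcal{U}$, satisfied by $\beta=0$. I will therefore pick the smooth section $\beta$ to vanish on $V\times\{0\}$, starting from the zero choice there and extending smoothly to the affine subbundle on a tubular neighbourhood. This yields the desired $\varepsilon=X+\beta+i\alpha$. The only real subtlety is this pointwise-to-smooth passage in the second step; once the algebraic identity $\pi_0(\xi)=X_0$ is observed, everything else is routine bookkeeping.
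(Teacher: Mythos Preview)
Your argument is correct and follows the paper's strategy: build $X+\alpha\in\psi^!\widehat{L}$ from the leaf-space Poisson bivector, lift it to $\psi^!L$ via identity~\eqref{eq:widehatL}, and then adjust the resulting $\beta$ by a section of $K\cap T^*\mathcal{U}=\Ann D$ so that $\varepsilon$ vanishes along $V\times\{0\}$. The only difference is cosmetic: you observe at once that $u_*X=X_0$ for the \emph{full} Euler field (since $u_*$ annihilates the $y$-directions), whereas the paper first obtains the partial field $Y=\sum_j(p_j\partial_{p_j}+q_j\partial_{q_j})$ and only afterwards adds the section $\sum_j y_j(\partial_{y_j}+\zeta_j)\in\Gamma(K)$ to complete the Euler field---a harmless reordering of the same construction.
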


\begin{proof}
We keep the notation $K$, $\Delta_0$, $\widehat{L}$ and $u$ for their respective transformations by $\psi$. The expression \eqref{eq:alpha} defines also $\alpha_{0} \in \Omega^1(V\times \R^{2(n-k)})$, so that $\alpha=u^*\alpha_0$.
Consider the vector field 
$$Y =\sum^{n-k}_{j=1}(p_{j}\partial_{p_{j}}+q_{j}\partial_{q_{j}}) \in \mathfrak{X}(\mathcal{U}).$$
Note that $\pi(\alpha_{0})=\pi_{0}(\alpha_{0})=u_* Y$, so $u_* Y +\alpha_{0}\in L_{\pi}$. Hence, $Y+\alpha\in \widehat{L}$. By \eqref{eq:widehatL}, there exists $\beta_1\in\Omega^1(\mathcal{U})$ such that 
 \begin{equation}\label{eq:section-1}
     Y + \beta_1 + i\alpha \in \Gamma(\psi^! L_{|U}).
 \end{equation}
 
  On the other hand, since $\pr_{TM}K=\Delta_0$, we obtain a local frame \begin{equation*}\label{frame_K}
 \{\partial_{y_{1}}+\zeta_{1},\ldots ,\partial_{y_{r-s}}+\zeta_{r-s},  \zeta_{r-s+1},\ldots ,\zeta_{r}\}
 \end{equation*}
of $K$, where $\zeta_{j}\in \Omega^1(\mathcal{U})$, $\{\zeta_{j}\}_{j=r-s+1}^r$ is a frame of $K\cap T^*\mathcal{U}$ (shrink $U$ if necessary). 
By evaluating \eqref{eq:section-1} on $V\times \{0\}$, we obtain $\beta_{1}{}_{|V\times \{0\}}$, which is a section of $(K\cap T^{*}\mathcal{U})_{|V\times \{0\}}$. Thus, there exist functions $c_{j}\in C^{\infty}(V)$, for $r-s+1\leq j\leq  r$, such that $-\beta_{1}{}_{|V\times \{0\}}=\sum_{j=r-s+1}^{r} c_{j}\zeta_{j}{}_{|V\times \{0\}}$. By extending the functions $c_{j}$ to $\mathcal{U}$, we obtain a section
\begin{equation}\label{eq:section-2}
\beta_{2}=\sum_{j=r-s+1}^{r} c_{j}\zeta_{j}\in \Gamma(K\cap T^{*}\mathcal{U})\subseteq \Gamma( \psi^! L_{|U})
\end{equation}
such that $\beta_{2}{}_{|V\times \{0\}}=-\beta_{1}{}_{|V\times \{0\}}$. Finally, we consider 
\begin{equation}\label{eq:section-3}
    \sum^{r-s}_{j=1} y_j\partial_{y_j}  + \sum^{r-s}_{j=1} y_j \zeta_j \in \Gamma(\psi^! L_{|U}).
\end{equation}
By taking the sum of \eqref{eq:section-1}, \eqref{eq:section-2} and \eqref{eq:section-3}, we obtain  $\mathcal{E}$. 
\end{proof}

 We are now ready to prove Theorem \ref{splitting}.
\begin{proof}[Proof of Theorem \ref{splitting}]

Assume that Theorem \ref{splitting} holds for $\psi^! L$ around $(m,0,0)$ with transversal $V$. Then, 
$$\psi^! L_{|U}\cong e^{B'}(\iota_{V\times \{0\}}^! \psi ^! L\times L_{(\R^{\rk \Delta_m}, \omega)}).$$
By applying $\mathbb{T}\psi$ on both sides we recover the local splitting of $L$. Consequently, for the sake of simplicity, we can use in this proof the notation $M$ for $U$, $N$ for $V$ and $L$ for $\psi^! L_{|U}$, and also the identification $M=N\times \R^{2(n-k)+r-s}$.

Let $\mathcal{E}=X+\beta+i\alpha \in \Gamma(L)$ be as in Lemma \ref{splitting_step2}. Since $X$ is an Euler field, the associated one-parameter subgroup to $\mathcal{E}$ in $\Aut(\TM)$  is
$$\mathbb{T} \kappa_{e^{-s}}\circ e^{\sigma_{s}},$$
where the two-form, by the change of variable $u=\log \tau$, is given by
$$\sigma_{s}=-\int^{s}_{0}\kappa_{e^{u}}^{*}(d\beta+id\alpha)du =- \int_{1}^{e^{s}}\frac{1}{\tau}\kappa_{\tau}^{*}(d\beta+id\alpha)d\tau.$$

Since $\mathcal{E}\in \Gamma(L)$, its flow preserves $L$ (see, e.g., \cite[Lem. 1.1.5]{vanderleer}). By performing the change of variable $s=-\log t$ and acting on $L$,
\begin{equation}\label{eq:L-preserved}
L = e^{\sigma_{\log t}} \mathbb{T} \kappa_{-t} L,   
\end{equation}
for all $t>0$. We define real forms $B_t$, $\omega_t \in \Omega^2(M)$, for $t>0$  such that 
$$B_{t}+i\omega_{t}=\sigma_{\log t}=\int^{1}_{t}\frac{1}{\tau}\kappa_{\tau}^{*}(d\beta+id\alpha)d\tau.$$
Note that this integral is well defined for $t=0$, so we can also define $B_0$ and $\omega_0$. Since $\alpha$ only depends on coordinates $p_j$ and $q_j$ we have 
\begin{equation*}
\omega_{t}=\int^{1}_{t}\frac{1}{\tau}\kappa^{*}_{\tau}(d\alpha)d\tau=(1-t^{2})d\alpha, 
\end{equation*} so $\omega_0=\sum_{j=1}^{n-k} dq_{j}\wedge dp_{j}$. Consider the limit when $t\to 0$ in \eqref{eq:L-preserved}, so
\begin{equation}\label{eq:L-preserved-2}
L= e^{B_0+i\omega_0} (\kappa_0^!L).    
\end{equation}
With $p:M=N\times \R^{2(n-k)+r-s}\to N$ the projection map and $\iota:N\to M$  the inclusion map, we have $\kappa_{0}=\iota\circ p$. Thus, \eqref{eq:L-preserved-2} becomes
$$ L = e^{B_0+i\omega_0}(p^{!}\iota^{!}L)=e^{B_0} e^{i\omega_0}(\iota^{!}L\times T\R^{(2n-k)+r-s})=e^{B_0}(\iota^{!}L\times L_{i\omega_0}).$$
By Lemma \ref{ri_order_adtv}, $\iota^{!}L$ has constant real index $s$ and order $s$. By Lemma \ref{sing_cxDirac}, $\iota^{!}L$ is a complex Dirac structure with associated Poisson structure vanishing at $m$ and so $m$ is of CR type. The result follows with $B=B_0$, which is closed.
\end{proof}

\begin{remark}
The section $\mathcal{E}=X+\beta+i\alpha\in \Gamma(\psi^! L_{|U})$ defines a section $\widehat{\mathcal{E}}=X+\alpha\in \Gamma(\psi^! \widehat{L}_{|U})$. By \cite[Thm. 5.1]{bursztyn2019splitting}, we obtain a splitting for $\widehat{L}$ in terms of the real Dirac structures associated to the factors of the  splitting of $L$, which coincides with that of \cite{blohmann}.
\end{remark}

A regular point for a complex Dirac structure is a point admitting a neighbourhood where the type is constant. For these points we have:

\begin{corollary}\label{regular_splitting}
Let $L\subset \TCM$ be a complex Dirac structure with constant real index $r$ and order $s$; let $m$ be a regular point of type $k$ and $N\subseteq M$ be a $(2k+s)$-dimensional submanifold containing $m$ transversal to $\Delta_m$. Then there exist a neighbourhood $U$ of $m$ such that
$$L|_{U}\cong e^{B}( L_{(D,J)}\times L_{i\omega}),$$
where $L_{(D,J)}$ is associated to a CR structure of codimension $s$ on a neighbourhood of $m$ in $N$, $L_{i\omega}$ is associated to the presymplectic foliation, and $B$ is a real two-form on $M$ that is closed on the presymplectic directions.
\end{corollary}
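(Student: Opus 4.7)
The plan is to combine Theorem \ref{splitting} with the maximal-type structure result of Proposition \ref{ext_type_cr}, using the regularity hypothesis to upgrade the pointwise CR-type condition produced by the splitting theorem to a constant-type statement on a neighbourhood. First, I would apply Theorem \ref{splitting} at $m$ to obtain a decomposition $U \cong V \times W$ with $\dim V = 2k+s$, a closed real two-form $B_1$, and
$$L_{|U} \cong e^{B_1}(\iota^! L \times L_{i\omega}),$$
where $\iota^! L$ has constant real index $s$ and order $s$ and is of CR type at $m$. The presymplectic form $\omega$ arises in the proof of Theorem \ref{splitting} as $d\alpha$ for the explicit $\alpha$ of \eqref{eq:alpha}, so $(W,\omega)$ is modelled on the standard presymplectic block, justifying the notation $\omega_{can}$.

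Next I would promote the CR-type condition at $m$ to a constant-type statement on a neighbourhood. Since $m$ is regular, $\type L = k$ on a neighbourhood of $m$, and since $B$-transforms preserve $\pr_{T_\C M} L$ (hence $E$, $D$, $\Delta$ and all invariants derived from them), the product $\iota^! L \times L_{i\omega}$ has constant type $k$. A direct corank computation (analogous to Lemma \ref{ri_order_adtv}) shows that type is additive under the product, and $\type L_{i\omega} = 0$ because $E = T_\C W$ for $L_{i\omega}$. Hence $\type \iota^! L = k$ in a neighbourhood of $m$ in $V$, which is maximal since $\dim V = 2k+s$ and $\order \iota^! L = s$. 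Applying Proposition \ref{ext_type_cr} to $\iota^! L$ yields a transverse CR structure $(D, \Delta, J)$ on $V$ and $B_2 \in \Omega^2(V)$ with $\iota^! L = e^{B_2} L_{(D, \Delta, J)}$. Equality of real index and order forces $\Delta_0 = 0$ via \eqref{eq:ri=or+rk-ker}, while maximality of the type forces $\Delta = \Delta_0$; thus $\Delta = 0$ and $(D,J)$ is a genuine CR structure of codimension $s$ on $V$.

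To conclude, I would pull $B_2$ back along the projection $p: U \to V$; since $p^*B_2$ vanishes on the $W$-directions, it acts trivially on $L_{i\omega_{can}}$. Setting $B := B_1 + p^* B_2$ gives
$$L_{|U} \cong e^{B}\bigl(L_{(D,J)} \times L_{i\omega_{can}}\bigr),$$
and $B$ is closed along the presymplectic directions because $B_1$ is fully closed and $p^* B_2$ depends only on the $V$-coordinates. I do not foresee a serious obstacle; the delicate point is the type-counting step, which rests on the additivity of type under products and its invariance under $B$-transforms, both immediate from the definitions.
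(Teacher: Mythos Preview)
Your proposal is correct and follows essentially the same route as the paper: apply Theorem \ref{splitting}, upgrade $\iota^!L$ to constant maximal type, invoke Proposition \ref{ext_type_cr}, and absorb the resulting $B_2$ into the $B$-field via pullback along the projection. The only difference is how you establish maximal type for $\iota^!L$: the paper chooses the transversal $N$ inside a constant-type neighbourhood so that $TN\oplus\Delta_{|N}=TM_{|N}$ and reads off $\Delta(\iota^!L)=TN\cap\Delta=0$ directly, whereas you argue via additivity of type under products and invariance under $B$-transforms; both arguments are short and valid.
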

\begin{proof}
Since $m$ is a regular point, there exists a neighbourhood of $m$ where the type is constant. Let $N$ be any transversal to $\Delta$ at $m$ inside that neighbourhood. Since $T_{m}N\oplus\Delta_m=T_{m}M$ at $m$, by shrinking $N$ if necessary, we have that $TN\oplus\Delta_{| N}=TM_{|N}$.

By Theorem \ref{splitting}, there exist a neighbourhood $U$ of $m$, and a neighbourhood of $m$ in $N$, which we denote again by $N$, such that $U\cong N\times S'\cong N\times \R^{2(n-k)+r-s}$ and $L_{|U}\cong e^{B'}(\iota^{!}L\times L_{i\omega})$ for $B'\in\Omega^2_{cl}(U)$.
 Since $TN\oplus\Delta_{| N}=TM_{|N}$, $\iota^{!}L$ is a complex Dirac structure of constant real index $s$, order $s$ and maximum type. By Proposition \ref{ext_type_cr}, there exists  $B_{1}\in \Omega^{2}(U)$ such that $e^{B_{1}}(\iota^{!}L)$ is a CR structure $(D,J)$ on~$N$. 

Finally, we have that $L_{|U}\cong e^{B'-pr^{*}_{N}B_{1}}(L_{(D,J)}\times L_{i\omega})$, where $\pr_{N}$ is the projection onto $N$. Note that $B=B'-pr^{*}_{N}B_{1}$ is only closed on the directions of $\R^{2(n-k)+r-s}$, the presymplectic foliation. \end{proof}

\appendix
\section*{Appendix: Visual representation of invariants}

The type in generalized complex structures takes integer values in the interval between $0$ (symplectic up to $B$-tranformation) and $\dim M/2$ (complex up to action by a real two-form).

\vspace{.2cm}
\begin{figure}[H]
\resizebox{.9\linewidth}{!}{
 \centering
 \begin{tikzpicture}[
  back line/.style={dashed, line width=0.3mm},
  cross line/.style={densely dotted, , line width=0.3mm},
  front line/.style={line width=0.4mm}]
  \pgfmathsetmacro{\x}{2.78};
  \pgfmathsetmacro{\y}{1.2};
  \pgfmathsetmacro{\factor}{0.3};

  \node (S) {symp.};
  \node [right of=S, node distance=\x cm] (X1) {};
  \node [right of=S, node distance=2*\x cm, label=below:gen.cplx.] (X2) {};
  \node [right of=S, node distance=3*\x cm] (X3) {};
  \node [right of=S, node distance=4*\x cm] (CO) {cplx.};
  \draw[front line, line width=.5mm] (S) -- (CO);
\end{tikzpicture} }
\vspace{-.2cm}
    \caption{Type in generalized complex geometry}
    \label{fig:line}
\end{figure}
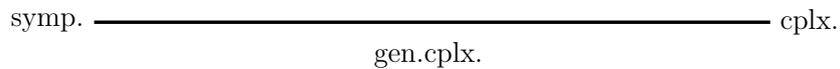
For complex Dirac structures, the real index $r$ ranges between $0$ and $\dim M$, the type between $0$ and $(\dim M-r)/2$ and, by \eqref{eq:order-ri}, the order ranges between $0$ and $r$. From the two latter inequalities, any admissible combination of the invariants can be represented as an integer point of a right tetrahedron (Figure \ref{fig:tetrahedron}, the two right angles being at the vertex `presymp.$_\C$'). We label the vertices with the corresponding structures for constant invariants, up to a transformation by a real two-form, which is closed on the left edge and, in general, closed in the presymplectic directions. The front edge of this tetrahedron corresponds to Figure \ref{fig:line} (where $L\cap \overline{L}=\{0\}$), whereas any slice for fixed real index is Table \ref{table:examples}. The furthest vertical edge corresponds to the complexification of Dirac structures in Example \ref{ex:complex-Dirac-1}, that is, $L=\overline{L}$.

\begin{figure}[htpb]
    \centering
    \resizebox{.75\linewidth}{!}{
 \begin{tikzpicture}[
  back line/.style={dashed, line width=0.3mm},
  cross line/.style={densely dotted, , line width=0.3mm},
  front line/.style={line width=0.4mm}]
  \pgfmathsetmacro{\x}{2.78};
  \pgfmathsetmacro{\y}{1.9};
  \pgfmathsetmacro{\factor}{0.3};

  \node (S) {symp.};
  \node [right of=S, node distance=\x cm] (X1) {};
  \node [right of=S, node distance=2*\x cm, label=below:gen.cplx.] (X2) {};
  \node [right of=S, node distance=3*\x cm] (X3) {};
  \node [right of=S, node distance=4*\x cm] (CO) {cplx.};

  \node [above of=X2, node distance=2*\y cm] (TM) {presymp.$_\C$};
  \node [above of=X2, node distance=4*\y cm] (DC) {Dirac$\phantom{}_\C$};
  \node [above of=X2, node distance=6*\y cm] (T*M) {$T^*_\C M $};

  \node [above of=X1, node distance=\y cm] (PS) {presymp.};
  \node [above of=PS, node distance=\y cm] (PSF) {Dirac}; 
  \node [above of=PSF, node distance=\y cm] (SF) {Poisson};
  
  \node [above of=X3, node distance=\y cm] (TH) {trans.hol.};
  \node [above of=TH, node distance=\y cm] (TCR) {trans.CR};
  \node [above of=TCR, node distance=\y cm] (CR) {CR};
  
  \draw[back line] (S) -- (PS) -- (TM) -- (DC) -- (T*M);
  \draw[back line] (TM) -- (TH) -- (CO);
  \draw[cross line] (PS) -- (TH) -- (TCR) -- (CR) -- (SF)  -- (PSF) -- (PS);
  \draw[front line, line width=.5mm] (S) -- (CO) -- (CR) -- (T*M) -- (SF) -- (S);
  
  \node [right of=S, node distance=.3*\x cm] (X12) {};
  \coordinate [above of=X12, node distance=4.5*\y cm] (AO) {};
  \coordinate [above of=AO, node distance=\factor*\x cm] (AY) {};
  \coordinate [right of=AO, node distance=\factor*\x cm] (AX) {};
  \coordinate [above of=AX, node distance=\factor*\y cm] (AZ) {}; 
  \draw[->] (AO) -- (AX) node[below] {$\type$};
  \draw[->] (AO) -- (AY) node[below left] {$\order$};
  \draw[->] (AO) -- (AZ) node[above right] {$\ri$};
\end{tikzpicture} }
    \caption{Right tetrahedron representing geometric structures encompassed by complex Dirac structures}
    \label{fig:tetrahedron}
\end{figure}
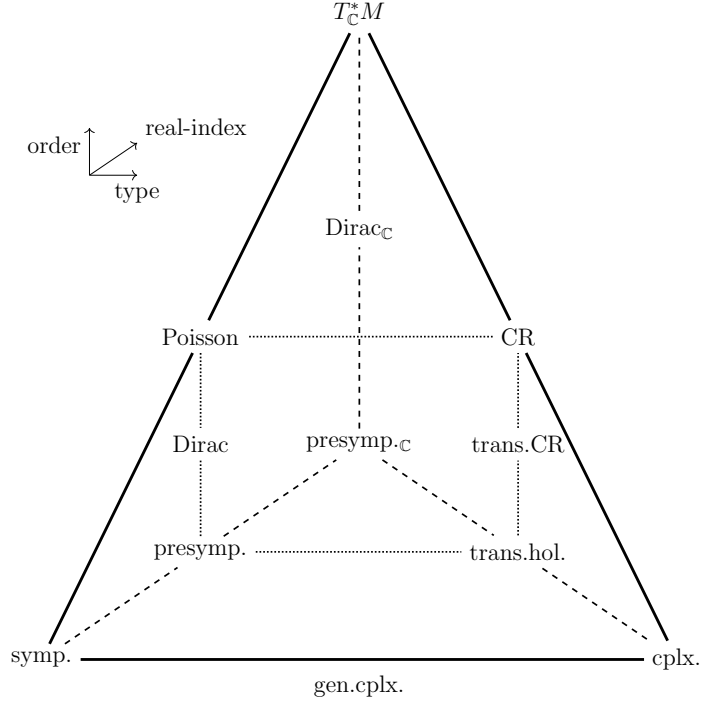

We can interpret the associated Dirac structure $\widehat{L}$ in terms of this tetrahedron and draw the surface of all $L$ mapping to the same type of $\widehat{L}$, which contains $(\widehat{L})_\C$. A point of coordinates $(r,s,k)$ corresponds to complex Dirac structures whose distribution $\Delta$ has rank $$2(n-k)+r-s=(2n+r)-s-2k=\dim M-s-2k,$$ so  the order of $(\widehat{L})_\C$ is $s+2k$. We thus obtain the planes $s+2k=C$ for $0\leq C\leq \dim M$. The intersection with the plane $s+2k=0$ corresponds to the edge (symp.--presymp.$_\C$) whereas the intersection with $s+2k=\dim M$ corresponds to (cplx.--$T_{\C}^*M$).

If we look at the change of invariants, we can regard a complex Dirac structure as moving along the tetrahedron. For instance, the property of a point $m$ of CR type (described at the end of Section \ref{sec:assoc-Dirac}) can be understood as the complex Dirac structure lying at $m$ on a point CR of the top right edge, and around $m$, on the triangle (CR--$T^*_\C M$--Poisson). In fact, the upper semicontinuity of the real index and the order, the parity of the real index and the type, and the upper semicontinuity of the type for constant order, give constraints on how a complex Dirac structure can move within this tetrahedron.

\bibliographystyle{alpha}
\bibliography{references}

\end{document}